

\documentclass[11pt,a4paper]{amsart}

%
%

\usepackage{latexsym}

\usepackage[dvips]{graphics}

\usepackage{amssymb}
\usepackage{amsthm}
\usepackage{amsmath}
\usepackage{amstext}
\usepackage{amscd}
\usepackage{color}

\pagestyle{headings}

\setlength{\textwidth}{\paperwidth}
\addtolength{\textwidth}{-5.8cm}
\setlength{\textheight}{\paperheight}
\addtolength{\textheight}{-4cm}
\addtolength{\textheight}{-\headheight}
\addtolength{\textheight}{-\headsep}
\addtolength{\textheight}{-\footskip}
\setlength{\oddsidemargin}{0.5cm}
\setlength{\evensidemargin}{0.5cm}
\setlength{\topmargin}{-0.5cm}

\usepackage{ifmtarg}
\makeatletter
  \newcommand{\TODO}[1]{\@ifmtarg{#1}{\emph{\textcolor{red}{\textbf{TODO}}}~}{\textcolor{red}{ \emph{\textbf{TODO:}~#1~}}}}
  \newcommand{\FORUS}[1]{\@ifmtarg{#1}{\emph{\textcolor{blue}
  {\textbf{TODO}}}~}{\textcolor{blue}{ \emph{\textbf{FOR US:}~#1~}}}}
\makeatother

\numberwithin{equation}{section}

\theoremstyle{plain}
\newtheorem{thm}{Theorem}[section]
\newtheorem{lem}[thm]{Lemma}

\newtheorem{theorem*}{Theorem}[]

\theoremstyle{definition}
\newtheorem{defi}[thm]{Definition}

\newtheorem{rmk}[thm]{Remark}

\theoremstyle{definition}

\theoremstyle{remark}
\newtheorem{rem}[thm]{Remark}

\newtheorem{question}{Question}

\usepackage{enumitem}
\setlist[itemize]{labelindent=.6em, itemindent=1em, leftmargin=!, label=\textbullet}
\usepackage{ifmtarg}




\title{Regular projections and regular covers in O-minimal structures}

\author{M'hammed OUDRANE}
\address {Universit\'e C\^ote d'Azur,  CNRS,  LJAD, UMR 7351, 06108 Nice, France}
\email{oudrane@unice.fr}

\keywords {
O-minimal geometry, metric proprieties of definable sets in o-minimal structures , regular projection,  regular definable covers 
. }

\begin{document}

\begin{abstract}
 In this paper we prove that for any definable subset $X\subset \mathbb{R}^{n}$ in a polynomially bounded
 o-minimal structure, with $dim(X)<n$, there is a finite set of regular projections (in the sense of Mostowski ). We give also a weak version of this theorem in any o-minimal structure, and we give
   a counter example in o-minimal structures that are not polynomially bounded. As an application we show that in any o-minimal structure there exist a regular cover in the sense of Parusi\'nski.  
\end{abstract}
\maketitle

\begin{center}
\textbf{Introduction :}
\end{center}
The regular projection theorem was first stated by Mostowski (see \cite{MS}) for complex analytic hypersurfaces, and then later a sub-analytic version was proved by Parusi\'nski (see \cite{Pa1}). The theorem states that for any compact sub-analytic set $X\subset \mathbb{R}^{n}$ there exists a finite set of regular projections (see Definition~\ref{Def2.1} below). In \cite{Pa1}, Parusi\'nski showed that a generic choice of $n+1$ projections is sufficient. The theorem has many application, it leads to the proof of some important metric proprieties of sub-analytic sets, it has been used by Parusi\'nski in \cite{Pa3} to prove the existence of Lipschitz stratification of sub-analytic sets, more precisely to show that every compact sub-analytic set can be decomposed in a finite union of L-regular sets in such a way that it is easy to glue Lipschitz stratifications of the pieces. Different ways to prove the existence of L-regular decomposition in any o-minimal structures were given by Kurdyka \cite{K}, Kurdyka and Parusi\'nski \cite{KP}, and Paw\l{}ucki \cite{Paw}. The regular projection theorem has been also used recently by Parusi\'nski in \cite{Pa2} to prove the existence of regular cover for sub-analytic relatively compact open subsets which is used in \cite{GS} and \cite{L} to construct Sobolev sheaves on the sub-analytic site.  \\
In \cite{Pa2}, Parusi\'nski asked if the regular projection theorem and the existence of regular covers can be shown in any o-minimal structures. A first answer was given by Nguyen \cite{Nh}, he gives a Lipschitz version of this theorem, means that we can always find a finite set of regular projections after  applying a bilipschitz homeomorphism of the ambient space. Nguyen's proof is based on cell decomposition and Valette's result on the existence of a good direction (see \cite{V}). A direct consequence of this is the existence of regular covers in any o-minimal structure. \\
In this paper we give an answer to Parusi\'nski's question, we will prove that the regular projection theorem works in any polynomially bounded o-minimal structure, and we will give an example showing that this is no longer true in non-polynomially bounded o-minimal structures. We will also give a weak version of the regular projection theorem in any o-minimal structure, which is a straightforward consequence of  the techniques used in \cite{Nh}. We will use this weak version to adopt the proof in \cite{Pa2} to show  the existence of regular covers in any o-minimal structure. Our fundamental tools for the proof will be the cell decomposition and Miller's result (\cite{C1}) to find a replacement for the Puiseux with parameter argument used in \cite{Pa1}.             

\newpage

\section{Definitions, terminology, notation :}
\subsection{Notations:}
\begin{itemize}

\item $\mathcal{P}(X)$: the set of subsets of $X$.
\item $grad(f)$: is the gradient of a $C^{1}$ function $f$. 
\item if $f:A\times B \longrightarrow C$ is a map, then for $b\in B$ we denote by $f(.,b)$ (or also by $f_{b}$) the map,
\begin{center}
$f(.,b):A\longrightarrow C $\\
$\; \; \; \; x \mapsto f(x,b)$
\end{center}
 
\item $B(v,r)$ is the open ball of radius $r$ and center $v$, and $\overline{B}(v,r)$ is the closed ball of radius $r$ and center $v$. 
\item $Reg^{p}(X)$ : the set of points $x\in X$ such that $X$ is a $C^{p}$ manifold near $x$.
\item For $v\in \mathbb{R}^{n-1}$, $\pi _{v}:\mathbb{R}^{n}\longrightarrow \mathbb{R}^{n-1}$ is the linear projection parallel to $Vect((v,1))$.
\item For a set $A\subset \mathbb{R}^{n}\times \mathbb{R}^{m}$, for $x_{0} \in \mathbb{R}^{n}$, we denote by $A_{x_{0}}$ the set:
\begin{center}
 $A_{x_{0}}=\{y\in \mathbb{R}^{m} \; : \; (x_{0},y)\in A \;     \}$
 \end{center} 
\item If $F:\mathbb{R}^{n}\times \mathbb{R}^{m}\longrightarrow \mathbb{R}$ is $C^{1}$, we denote by $D_{1}F$ the differential of $F$ with respect to the first variable, or we denote it also by $D_{y}F$, and we denote by $D_{x}F$ or $D_{2}F$ the differential of $F$ with respect to the second variable.

\item $\overline{A}$ denotes the topological closure of $A$.

\item for a set $U \subset \mathbb{R}^{n}$, $\partial U$ denotes the frontier of $U$, i.e $\partial U=\overline{U}\setminus U$. 

\item $B\subset  \mathbb{R}^{k}$ is called an open (closed) box if it can written as a product:

\begin{center}
  $B=I_{1}\times ....\times I_{k}$, 
  \end{center}  
where the $I_{i}$ are open (closed) intervals in $\mathbb{R}$.

\item We denote by $\mathbb{N}$ the set of nonnegative integers.

\item If $A\subset \mathbb{R}$, we denote by $A_{+}$, $A_{-}$, and $A^{\ast}$ as the following:

\begin{center}
$A_{+}=\{x\in A \; : \; x\geqslant 0   \}$\\
$A_{-}=\{x\in A \; : \; x\leqslant 0   \}$\\
$A^{\ast}=\{x\in A \; : \; x\neq 0   \}$
\end{center}
And we denote by $A_{+}^{\ast}:=A_{+} \cap A^{\ast}$ and $A_{-}^{\ast}:=A_{-} \cap  A^{\ast}$.

\item a map $f: \mathbb{R} \longrightarrow \mathbb{R}$ is said to be ultimately equal to $0$ if there is $M\in \mathbb{R}$ such that $f(x)=0$ for all $x>M$. 

\end{itemize}

\subsection{O-minimal structures:}
An o-minimal structure on the field $(\mathbb{R},+,.)$ is a family $\mathcal{D}=(\mathcal{D}_{n})_{n\in \mathbb{N} }$ such that for any $n$, we have:
\begin{itemize}

\item[(1)] $\mathcal{D}_{n} \subset \mathcal{P}(\mathbb{R}^{n})$ is stable by complement and finite union.
\item[(2)] For any $P\in \mathbb{R}[X_{1},...,X_{n}]$, we have $Z(P) \in \mathcal{D}_{n}$, where $Z(P)=\{ x \in \mathbb{R}^{n}\; : \; P(x)=0 \}$.
\item[(3)] $\pi (\mathcal{D} _{n})\subset \mathcal{D}_{n-1}$, where $\pi : \mathbb{R}^{n} \longrightarrow \mathbb{R}^{n-1}$ is the standard projection.
\item[(4)] For any $A\in \mathcal{D}_{1} $, $A$ is a finite union of points and intervals.
\end{itemize}
For a fixed o-minimal structure $\mathcal{D}$, we have the definitions:
\begin{itemize}
\item[($\bullet$)] Elements of $\mathcal{D}_{n}$ are called definable sets.
\item[($\bullet$)] If $A\in \mathcal{D}_{n}$ and $B\in \mathcal{D}_{m}$, then a map $f:A\longrightarrow B$ is called a definable map if its graph is a definable set.
\item[($\bullet$)] The o-minimal structure $\mathcal{D}$ is said to be polynomialy bounded if for each definable function $f:\mathbb{R}\longrightarrow \mathbb{R}$, there is some $n\in \mathbb{N}$ such that $\mid f(x) \mid \leq x^{n}$ for $x$ big enough. 
\item[($\bullet$)] For $p\in \mathbb{N}\cup \{\infty  \}$ a definable $C^{p}$-manifold is a manifold with finite atlas of definable transition maps and definable domains, see e.g. \cite{F}. Most of the notions from the $C^{p}$ manifolds can be extended to definable $C^{p}$ manifolds in an obvious way.

\end{itemize}

\begin{thm} (Miller's Dichotomy \cite{C2}). 
An o-minimal structure $\mathcal{D}$ is either polynomially bounded or contains the graph of the exponential function.
\end{thm}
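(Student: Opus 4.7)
Suppose $\mathcal{D}$ is not polynomially bounded; the task is to produce the graph of $\exp$ as a definable subset of $\R^{2}$. My plan is to work inside the Hardy field $\mathcal{H}$ of germs at $+\infty$ of definable unary functions. By the o-minimal monotonicity and $C^{k}$-cell decomposition theorems, every such germ is eventually $C^{\infty}$ and strictly monotone, so differentiation and inversion are legitimate operations on $\mathcal{H}$. Moreover, for nonzero $g,h\in\mathcal{H}$ the quotient $g/h$ is a definable unary function, hence by o-minimality has a unique limit in $\R\cup\{\pm\infty\}$; this yields the trichotomy $g\prec h$, $g\asymp h$, or $g\succ h$, so $\mathcal{H}$ is a totally ordered differential field.

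The first substantive step is to extract from the failure of polynomial boundedness a definable positive function $f$ with super-polynomial growth, and to show via an elasticity argument that its logarithmic derivative strictly dominates $1/x$. Concretely, if the elasticity $\varepsilon_{f}(x):=xf'(x)/f(x)$ were bounded above by a constant $C$ eventually, integrating $f'/f\le C/x$ inside $\mathcal{H}$ forces $f(x)=O(x^{C+1})$, contradicting the hypothesis. Hence $\varepsilon_{f}(x)\to+\infty$, and iterating the same analysis produces a tower of definable germs growing faster than any finite iterate of a power map.

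The main obstacle, and the core of Miller's original argument, is turning this growth hierarchy into the actual graph of $\exp$. The strategy I would adopt is to construct, out of the super-polynomial germ $f$, a definable antiderivative $L$ of $1/x$ on a half-line inside $\mathcal{H}$: the existence of such an $L$ is what fails in the polynomially bounded case and what our elasticity analysis has secured. Setting $E:=L^{-1}$, one has a definable bijection which by change of variables satisfies a Cauchy-type identity $E(x+y)=E(x)E(y)$ asymptotically. The hardest technical point, where I expect most of the real work to sit, is \emph{rigidifying} this asymptotic identity into an exact identity on some interval — one must use the fact that o-minimal structures admit no definable infinitesimal oscillation to rule out ``near-miss'' exponentials that satisfy the equation only up to lower-order error. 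Once $E$ satisfies the Cauchy equation on an interval, the classical characterization of the real exponential forces $E(x)=a^{x}$ for some $a>1$, and since $\log a$ is then a definable constant, the graph of the standard $\exp$ is recovered by an affine reparametrization and is therefore definable in $\mathcal{D}$.
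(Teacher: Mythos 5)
The paper does not actually prove this statement: it is quoted from Miller \cite{C2} and used as a black box, so there is no internal proof to compare against. Judged on its own terms, your sketch contains a genuine gap at its central step. The preliminaries are fine: definable unary functions are eventually $C^{1}$ and monotone, the germs at $+\infty$ form a totally ordered differential field, and the elasticity argument is correct --- the limit of $xf'(x)/f(x)$ exists in $\mathbb{R}\cup\{\pm\infty\}$ by o-minimality, and if it were a finite $C$ then $f(x)/x^{C}$ would be eventually decreasing, hence bounded, contradicting super-polynomial growth (note this uses only the mean value theorem on the honest real function $f(x)/x^{C}$, not any ``integration inside $\mathcal{H}$''). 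The problem is the sentence claiming that a definable antiderivative $L$ of $1/x$ is ``what our elasticity analysis has secured.'' It is not. The existence of such an $L$ is \emph{equivalent} to the conclusion of the theorem (it is the definability of $\log$), and nothing you have established produces it: knowing $xf'(x)/f(x)\to+\infty$ only says that $f'/f$ eventually exceeds every $N/x$, which is compatible with $f'/f$ overshooting $1/x$ by an arbitrary definable factor and gives no function whose derivative is asymptotic to $c/x$ with $c\in\mathbb{R}^{*}$. Manufacturing such a function out of $f$ is precisely the clever part of Miller's argument, and your proposal replaces it with an assertion.

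The concluding step is also misdirected. If $L$ really were an exact definable antiderivative of $1/x$ with $L(1)=0$, then $L(xy)-L(x)-L(y)$ has vanishing derivative in $x$, so the Cauchy identity for $E=L^{-1}$ holds exactly and no ``rigidification of an asymptotic identity'' is needed. If instead one only obtains a definable $g$ with $xg'(x)\to c\neq 0$, the standard route (and Miller's) is not to repair an approximate functional equation but to pass to the limit of the definable family: for each $y>0$ one has $g(xy)-g(x)=\int_{1}^{y}xg'(xu)\,du\to c\log y$ as $x\to+\infty$, the limit exists by o-minimality, and the resulting function of $y$ is definable by a first-order formula; this defines $c\log$, hence $\exp$, outright. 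So the one step you defer as ``where most of the real work sits'' is where essentially all of it sits, and as written the sketch does not close it; for the actual construction you should consult \cite{C2}.
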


\subsection{Cell decomposition}
Take $p\in \mathbb{N}$, a definable set $C$ in $\mathbb{R}^{n}$ is said to be a $C^{p}$-cell if:

\begin{itemize}

\item[case $n=1$:] $C$ is either a point or an open interval.
\item[case $n\geq 2$:] $C$ is one of the following:

\begin{itemize}
\item[$\bullet$] $C=\Gamma _{\phi}$ (the graph of $\phi$), where $\phi :B\longrightarrow \mathbb{R}$ is a $C^{p}$ definable function, where $B$ is $C^{p}$-cell in $\mathbb{R}^{n-1}$. 
\item[$\bullet$] $C=\Gamma (\phi , \varphi)=\{ (x,y) \in B\times \mathbb{R} \; \; \phi (x)<y<\varphi (x)  \}$, where $\phi$ and $\varphi$ are two $C^{p}$ definable functions on a $C^{p}$-cell $B$, such that $\phi < \varphi$ with the possibility of $\phi=-\infty$ or $\varphi =+\infty$.

\end{itemize}

\end{itemize}

A $C^{p}$-cell decomposition of $\mathbb{R}^{n}$ is defined by induction as follows:

\begin{itemize}

\item[$\bullet$] A $C^{p}$-cell decomposition of $\mathbb{R}$ is finite partition by points and open intervals.
\item[$\bullet$] A $C^{p}$-cell decomposition of $\mathbb{R}^{n}$ is a finite partition $\mathcal{A}$ of $\mathbb{R}^{n}$ by $C^{p}$-cells, such that $\pi (\mathcal{A})$ is a $C^{p}$-cell decomposition of $\mathbb{R}^{n-1}$, where $\pi : \mathbb{R}^{n}\longrightarrow \mathbb{R}^{n-1}$ is the standard projection and $\pi (\mathcal{A})$ is the family:

\begin{center}
$\pi (\mathcal{A})=\{\pi (A) \; : \; A\in \mathcal{A}    \; \}$.
\end{center}
\end{itemize}

\begin{thm}
Let $p\in \mathbb{N}$ and $\{ X_{1},...,X_{n} \}$ be a finite family of definable sets of $\mathbb{R}^{n}$. Then there is a $C^{p}$-cell decomposition of $\mathbb{R}^{n}$ compatible with this family, i.e. each $X_{i}$ is a union of some cells.
\end{thm}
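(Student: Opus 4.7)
The plan is to argue by induction on $n$, simultaneously with a companion statement on functions which is the classical partner of cell decomposition. Write $(I_n)$ for the stated theorem in dimension $n$, and $(II_n)$ for the assertion that for any finite family of definable functions $f_1,\ldots,f_k : A \to \mathbb{R}$ on a definable set $A \subset \mathbb{R}^n$, there is a $C^p$-cell decomposition of $\mathbb{R}^n$ compatible with $A$ such that the restriction of each $f_i$ to any cell contained in $A$ is of class $C^p$. Simultaneous induction is essential because the inductive step of $(I_n)$ needs $(II_{n-1})$ applied to the boundary functions of the fibers, while $(II_n)$ is derived from $(I_n)$ together with lower-dimensional analysis.

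The base case $n=1$ is immediate. $(I_1)$ is a direct translation of axiom~(4): any definable subset of $\mathbb{R}$ is a finite union of points and intervals, and a common refinement of finitely many such partitions is itself a cell decomposition of $\mathbb{R}$. For $(II_1)$, one invokes the o-minimal monotonicity theorem, which gives, for each definable $f_i$, a finite partition of $\mathbb{R}$ on which $f_i$ is either constant or strictly monotone and of class $C^p$; again one takes a common refinement.

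For the inductive step, assume $(I_{n-1})$ and $(II_{n-1})$. Given $X_1,\ldots, X_k \subset \mathbb{R}^{n}$, consider the projection $\pi : \mathbb{R}^{n} \to \mathbb{R}^{n-1}$. For each $x \in \mathbb{R}^{n-1}$ and each $i$, the fiber $(X_i)_x$ is, by axiom~(4), a finite union of points and intervals. The decisive technical ingredient is a \emph{uniform finiteness} statement: there exists $N$ such that every fiber $(X_i)_x$ has at most $N$ connected components. Granting this, one defines the endpoint functions $\varphi^{(i)}_1(x) < \cdots < \varphi^{(i)}_{N_i(x)}(x)$ of the components, together with the definable sets where $N_i$ takes each prescribed value. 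Applying $(II_{n-1})$ to all these endpoint functions simultaneously produces a $C^p$-cell decomposition of $\mathbb{R}^{n-1}$ on each cell $B$ of which the $\varphi^{(i)}_j$ are of class $C^p$, the $N_i$ are constant, and the ordering among the values $\varphi^{(i)}_j(x)$ (across $i$ and $j$) is the same for every $x\in B$. Over each such $B$, the cells in $\mathbb{R}^{n}$ are the graphs of the $\varphi^{(i)}_j|_B$ and the open bands they carve out, which are $C^p$-cells by the very definition given above. The implication $(I_n)\wedge(II_{n-1})\Rightarrow(II_n)$ then follows by applying $(I_n)$ to $A$ together with the definable sets where partial derivatives of the $f_i$ up to order $p$ exist and are continuous, and invoking the inductive hypothesis on the lower-dimensional ``bad'' loci.

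The main obstacle is the uniform finiteness step, which is exactly the point where axiom~(3) is used in a nontrivial way. The claim, proved by a parallel induction, is that for any definable family $\{Y_x\}_{x\in\mathbb{R}^{n-1}}$ of finite subsets of $\mathbb{R}$, the cardinality of $Y_x$ is bounded independently of $x$. The argument proceeds by contradiction: if no bound exists, the sets $\{x : |Y_x| \geq k\}$ form a descending chain of definable sets (definable by projection), and by selecting a definable curve along which the cardinality grows without bound, one produces, via a suitable one-dimensional slice, a definable subset of $\mathbb{R}$ with infinitely many connected components, contradicting axiom~(4). Once uniform finiteness is in hand, the remainder of the proof is careful bookkeeping; the monotonicity theorem together with this uniform boundedness constitute the genuine combinatorial content of the theorem.
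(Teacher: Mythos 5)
The paper does not actually prove this theorem: it is quoted as standard background with a pointer to \cite{M} and \cite{VD}, and your outline is precisely the simultaneous-induction proof given in those references --- $(I_n)$ and $(II_n)$ proved together by induction on $n$, the monotonicity theorem handling the base case, and the fibers over a cell decomposition of $\mathbb{R}^{n-1}$ described by finitely many ordered $C^p$ boundary functions. At the level of architecture there is nothing to object to, and you correctly isolate uniform finiteness as the step where the o-minimality axioms do real work.

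That step, however, is where your argument has a genuine gap. The sets $S_k=\{x: |Y_x|\geq k\}$ are indeed definable, but a descending chain of definable sets need not stabilize, and all $S_k$ being nonempty is not by itself contradictory. To ``select a definable curve along which the cardinality grows without bound'' you would need a point of $\bigcap_k \overline{S_k}$ (which need not exist without first localizing to a compact box) and then the curve selection lemma --- which is itself normally deduced from cell decomposition, so invoking it here is circular. Worse, even granting such a curve $\gamma$, the set $\{(t,y): y\in Y_{\gamma(t)}\}$ is a definable subset of $\mathbb{R}^2$ with finite fibers of unbounded cardinality, and no slice of it --- vertical, horizontal, or along a definable arc --- is a definable subset of $\mathbb{R}$ with infinitely many connected components: the vertical fibers are finite by hypothesis and the horizontal ones are finite unions of points and intervals by axiom $(4)$. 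So the promised contradiction does not materialize; what you have really done is reduce the general case to the planar uniform finiteness lemma, which is exactly the hard combinatorial core of the theorem (in \cite{VD} it is handled by a separate analysis of ``normal'' points of a planar definable set, using the monotonicity theorem and a counting argument). As written, the crucial lemma is asserted rather than proved, and the rest of the induction cannot get off the ground without it.
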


\begin{proof}
See \cite{M} or \cite{VD}.
\end{proof} 
Now we can define the dimension of a definable set.
Take $X$ a definable subset of $\mathbb{R}^{n}$ and $\mathcal{C}$ a cell decomposition of $\mathbb{R}^{n}$ compatible with $X$, then we define the dimension :

\begin{center}
$dim_{\mathcal{C}}(X)=max\{ dim(C) \; : \; C\subset X \; and \; C\in \mathcal{C} \}$.
\end{center}
This number does not depend on $\mathcal{C}$, we denote it by $dim(X)$.

\section{Regular projection theorem}
Let $X\subset \mathbb{R} ^{n}$ be a definable
 subset of $\mathbb{R}^{n}$. For any $\lambda \in \mathbb{R}^{n-1}$,  we  denote  by $\pi _{ \lambda}:\mathbb{R}^{n}\longrightarrow \mathbb{R}^{n-1}$ , the  projection  parallel  to  the  vector $(\lambda , 1) \in \mathbb{R}^{n}$. Fix $\varepsilon$ and $C$ positive real numbers and $p \in \mathbb{N}^{\ast}$. For $v\in \mathbb{R} ^{n-1}$ and $x\in \mathbb{R}^{n}$ we define the cone $C_{\varepsilon}(x,v)$ by:
 
 \begin{center}
  $C_{\varepsilon}(x,v)=\{ x+t(v',1) \; : \; t\in \mathbb{R}^{\ast} \; and \; v' \in B(v,\varepsilon)   \}$.
  \end{center}
\begin{defi}\label{Def2.1}
We say that the projection $\pi _{\lambda}$ is  $(\varepsilon,p)$-weak regular at a point $x\in \mathbb{R}^{n}$(with respect to $X$) if:

\begin{itemize}

\item[$(1)$] $(\pi _{\lambda}) _{\mid X}$ is finite.
 \item[$(2)$] the intersection of $X$ with the cone $C_{\varepsilon}(x,\lambda)$ is either empty or a disjoint finite union of sets of the form: 
\begin{center}
 $A_{f_{i}}=\{x+f_{i}(\lambda ')(\lambda ' , 1) \; : \; \lambda ' \in B(\lambda ,\varepsilon )  \}$,
 \end{center} 
where $f_{i}$ are non-vanishing $C^{p}$ functions defined on $B(\lambda , \varepsilon)$.
 \end{itemize}
 
 We say that the projection $\pi _{\lambda}$ is  $(\varepsilon,C,p)$-regular at a point $x\in \mathbb{R}^{n}$(with respect to $X$) if moreover  we have:
 
 \begin{itemize}
 
 \item[$(3)$] $\parallel grad(f_{i}) \parallel \leq C\mid f_{i} \mid$ on $B(\lambda , \varepsilon)$ for all $i$.
  
\end{itemize}

\end{defi}

\begin{rmk}
The notion of regular projection introduced by Mostowski in \cite{MS} is not exactly the same as the notion of good direction ( regular projection in the sense of Valette in \cite{V}, see also \cite{Nh} for comparing both notions ). One can show that a projection is regular in the sense of Mostowski if and only if it is week regular and good.   
\end{rmk}

\subsection{Weak regular projection theorem :}

Let $\mathcal{A}$ be an o-minimal structure on $\mathbb{R}$ (no condition on $\mathcal{A}$).

\begin{thm} \label{theorem2}
Let $X$ be a definable subset of $\mathbb{R}^{n}$ such that $dim(X) \leqslant n-1$, and $p\in \mathbb{N}^{\ast}$. Then there is $\varepsilon > 0$ and $\{v_{1},...,v_{k} \}\subset \mathbb{R}^{n-1}$ such that for every $x\in \mathbb{R}^{n}$ there is $i$ such that $\pi _{v_{i}}$ is $(\varepsilon,p)$-weak regular at $x$ with respect to $X$.

\end{thm}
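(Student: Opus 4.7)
The plan is to combine a $C^p$-cell decomposition of $\mathbb{R}^n$ compatible with $X$ with Valette's good-direction theorem \cite{V} (used in the form of \cite{Nh}) and the implicit function theorem. First I would fix a $C^p$-cell decomposition $\mathcal{C}$ of $\mathbb{R}^n$ compatible with $X$. Since $\dim X \leq n-1$, each cell contained in $X$ is a definable $C^p$-submanifold of dimension at most $n-1$, with a well-defined tangent space at every point. Call $v \in \mathbb{R}^{n-1}$ good at $x$ with respect to $X$ if the line $L_x^v = x + \mathbb{R}(v,1)$ meets $X$ in finitely many points and at each such point the direction $(v,1)$ is not tangent to the ambient cell. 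Arguing as in \cite{Nh} (which itself rests on \cite{V}), one can then choose a finite set $v_1,\ldots,v_k \in \mathbb{R}^{n-1}$, all avoiding the lower-dimensional definable set of directions for which $\pi_v|_X$ fails to be finite-to-one, and such that for every $x \in \mathbb{R}^n$ at least one $v_i$ is good at $x$.

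Next, I would show that goodness at $x$ implies local weak regularity. Fix $x$ and a good direction $v = v_i$, and enumerate $L_x^v \cap X = \{y_1,\ldots,y_r\}$. At each $y_\ell$, the transversality of $(v,1)$ to the tangent space of the cell through $y_\ell$ permits the implicit function theorem, yielding some $\varepsilon_\ell(x) > 0$ and a $C^p$-function $f_\ell : B(v, \varepsilon_\ell(x)) \to \mathbb{R}$ with
\[
A_{f_\ell} = \{x + f_\ell(v')(v', 1) : v' \in B(v, \varepsilon_\ell(x))\} \subset X
\]
parametrizing $X$ near $y_\ell$. Setting $\varepsilon(x) = \min_\ell \varepsilon_\ell(x)$, and shrinking further if necessary so that the $A_{f_\ell}$ are pairwise disjoint and exhaust $X \cap C_{\varepsilon(x)}(x,v)$, one checks that $\pi_{v_i}$ is $(\varepsilon(x), p)$-weak regular at $x$ in the sense of Definition~\ref{Def2.1}.

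The main obstacle is to replace the pointwise $\varepsilon(x)$ by a single positive $\varepsilon$ valid for all $x \in \mathbb{R}^n$. The supremum
\[
\varepsilon_V(x) = \sup\{\varepsilon > 0 : \pi_{v_i} \text{ is } (\varepsilon, p)\text{-weak regular at } x \text{ for some } i\},
\]
where $V = \{v_1, \ldots, v_k\}$, is definable in $\mathcal{A}$ and strictly positive on $\mathbb{R}^n$, but its infimum need not be positive. Following the strategy of \cite{Nh}, I would argue by iterated refinement of $V$: o-minimal cell decomposition and curve selection show that the locus where $\varepsilon_V$ degenerates is contained in a lower-dimensional definable subset of $\mathbb{R}^n$, which can be handled by augmenting $V$ with further good directions adapted to a prescribed piece. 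Induction on the dimension of the bad locus (or on an auxiliary definable invariant attached to the cell decomposition) terminates after finitely many steps and produces a finite $V$ for which $\inf_x \varepsilon_V(x) > 0$, yielding the required uniform $\varepsilon$.
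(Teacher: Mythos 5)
Your first two steps (transversality of generic lines to the cells of $X$, plus the implicit function theorem giving a pointwise radius $\varepsilon(x)$) are sound and correspond to what the paper actually does: it shows that for each $x$ the set $\mathcal{B}_x$ of bad directions has dimension $<n-1$ by combining Loi's transversality lemma with a dimension count for the directions meeting $Sing^{p}(X)$. The genuine gap is in your third step, which you yourself flag as ``the main obstacle''. Your proposed induction on the dimension of the bad locus is not well-founded. Concretely: if $Z=\{x:\liminf_{x'\to x}\varepsilon_V(x')=0\}$ is the degeneracy locus, then augmenting $V$ by directions adapted to the points of $Z$ only guarantees $\varepsilon_{V\cup W}>0$ \emph{on} $Z$; the degeneration happens at points \emph{near} $Z$ but not in it, and to control those you would need $x\mapsto\varepsilon_{\{w\}}(x)$ to be locally bounded below near each $x_0\in Z$ --- a lower semicontinuity of the weak-regularity radius which is essentially the statement being proved and does not hold direction-by-direction (the cone $C_{\varepsilon}(x,w)$ for $x$ close to $x_0$ is not contained in any $C_{\varepsilon'}(x_0,w)$ near its apex, and new tangential or singular intersections can appear). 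After augmentation the new bad locus sits in the discontinuity set of $\varepsilon_{V\cup W}$, which is again of dimension $\leq n-1$ but need not be contained in $Z$ nor of smaller dimension, so no invariant decreases and the iteration has no reason to terminate.

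The paper closes this gap with a different mechanism that you should adopt: work in the product space $\mathbb{R}^{n}\times\mathbb{R}^{n-1}\times\mathbb{R}^{\ast}_{+}$ with the definable set $\mathcal{X}$ of triples $(x,v,\varepsilon)$ for which $\pi_{v}$ is $(\varepsilon,p)$-weak regular at $x$, introduce the definable function $F(x,v)=\sup\{\varepsilon:(x,v,\varepsilon)\in\mathcal{X}\}$, take a cell decomposition of $\mathbb{R}^{n}\times\mathbb{R}^{n-1}\times\mathbb{R}$ compatible with its graph, and apply the fiberwise box-selection lemma (Lemma~\ref{lemma1}, quoted from \cite{Nh}) twice: once in the $\varepsilon$-variable to obtain, over each top-dimensional cell $C$ of $\mathbb{R}^{n}\times\mathbb{R}^{n-1}$, an interval $[a_{C},b_{C}]$ with $C\times[a_{C},b_{C}]\subset\mathcal{X}$, and once in the $v$-variable (using $\dim(\mathcal{B}_{x})<n-1$) to obtain a finite partition $\mathcal{P}$ of $\mathbb{R}^{n}$ and boxes $B_{P}$ of directions with $P\times B_{P}$ inside a single such cell. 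Taking one corner $v_{P}$ of each $B_{P}$ and $\varepsilon_{0}=\min_{P}a_{C_{P}}$ produces the finite set of directions and the uniform $\varepsilon$ simultaneously, with no induction on a bad locus and using the monotonicity of the property in $\varepsilon$. Your local analysis can be kept essentially verbatim as the verification that $\mathcal{X}$ is definable and that $\dim(\mathcal{B}_{x})<n-1$ for every $x$.
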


\begin{proof} For the proof we need a few lemmas.

\begin{lem} \label{lemma1}  Take $C$ a definable subset of $\mathbb{R}^{n}$, and let $B$ to be a box in $\mathbb{R}^{k}$. Let $\Delta$ be a definable subset of $\mathbb{R}^{n}\times \mathbb{R}^{k}$ such that $dim(\Delta _{x})\leqslant (k-1)$, for all $x\in C$. Then there exists a finite definable partition $\mathcal{C}$ of $C$, such that for each $D\in \mathcal{C}$ there is a box $B_{D}\subset B$ such that we have:

\begin{center}
$(D\times B_{D})\cap \Delta =\emptyset$.
\end{center}
\end{lem}

\begin{proof}
See \cite{Nh}.
\end{proof}

\begin{rmk} We can easily extend Lemma~\ref{lemma1} to the case of $C^{p}$-definable manifolds ( we can replace $\mathbb{R}^{k}$ and $\mathbb{R}^{n}$ by definable manifolds ) by choosing charts and reducing to the case of the last lemma. 
\end{rmk}
\begin{lem} \label{lemma2}
Let $M$, $J$, and $N$ be definable manifolds, and let $f:M\times J\longrightarrow N$ be a $C^{1}$ definable submersion. Take $\mathcal{S}$ a finite collection of $C^{1}$ definable submanifolds of $N$, then:

\begin{center}
$\rho (f,\mathcal{S})= \{s\in J \; : \; f(.,s) \; is \; transverse \; to \; all \; elements \; of \; \mathcal{S} \;      \}$
\end{center}
is a definable set and we have : $dim(J\setminus \rho (f,\mathcal{S}))<dim(J)$.

\end{lem}

\begin{proof}
See [\cite{L3}, Lemma 3 ].
\end{proof}
Take $m\in \mathbb{N}$. Let $P_{X}(x,v,\varepsilon)$ be a property on $(x,v,\varepsilon) \in \mathbb{R}^{n}\times \mathbb{R}^{m} \times \mathbb{R}^{\ast}_{+}$, i.e a map $P:\mathbb{R}^{n}\times \mathbb{R}^{m} \times \mathbb{R}^{\ast}_{+} \longrightarrow \{0,1\}$. Denote by $\pi : \mathbb{R}^{n}\times \mathbb{R}^{m} \times \mathbb{R}\longrightarrow \mathbb{R}^{n}\times \mathbb{R}^{m}$ the standard projection.  

\begin{lem} If we have : 
\begin{itemize}
\item[(1)] for all $(x,v)\in \mathbb{R}^{n}\times \mathbb{R}^{m}$, $\varepsilon$ and $\varepsilon '$ such that $\varepsilon < \varepsilon '$,  we have:

\begin{center}
$P_{X}(x,v,\varepsilon ')$ is true $\Rightarrow$ $P_{X}(x,v,\varepsilon )$ is true. 
\end{center}
\item[(2)] $\mathcal{X}=\{ (x,v,\varepsilon) \in \mathbb{R}^{n}\times \mathbb{R}^{m} \times \mathbb{R}^{\ast}_{+} \; : \; P_{X}(x,v,\varepsilon) \; is \; true \;  \}$ is definable and for all $x\in \mathbb{R}^{n}$ $dim(\mathcal{B}_{x})<m$, where $\mathcal{B}$ is defined by:

\begin{center}
$\mathcal{B}=\{(x,v)\in \mathbb{R}^{n} \times \mathbb{R}^{m} \; ; \; \pi ^{-1}(x,v)\cap \mathcal{X}=\emptyset  \}$.
\end{center}
\end{itemize}
Then there is $\varepsilon _{0}>0$ and $\mathbb{A}=\{v_{1},...,v_{k} \}\subset \mathbb{R}^{m}$ such that for all $x\in \mathbb{R}^{n}$ there is some $i$ such that $P_{X}(x,v_{i},\varepsilon _{0})$ is true.
\end{lem}

\begin{proof}
By the assumptions $(1)$ and $(2)$, the function $F:\mathbb{R}^{n}\times \mathbb{R}^{m} \longrightarrow \overline{\mathbb{R}}_{+}$  given by:  
\begin{center}
$F(x,v)=\left\{
                        \begin{array}{ll}
                        sup \{\varepsilon : \; P_{X}(x,v,\varepsilon) \; is \; true      \} \; \; \; \; \; \;  if \{\varepsilon : \; P_{X}(x,v,\varepsilon) \; is \; true      \}\neq \emptyset \\
                        0 \; \; \; \; \; \; \; \; \; \; \; \; \; \; \; \; \; \; \; \; \; \; \; \; \; \; \; \; \; \; \; \; \; \; \; \; \; \; \; \; \; \; \; \; \; \; \;  if \; \{\varepsilon : \; P_{X}(x,v,\varepsilon) \; is \; true      \}= \emptyset. \\
                        \end{array}
                        \right.$
\end{center}
is well defined and definable.\\
Now take a cell decomposition $\mathcal{D}$ of $\mathbb{R}^{n}\times \mathbb{R}^{m} \times \mathbb{R}$ compatible with $\mathbb{R}^{n} \times \mathbb{R}^{m} \times \{0 \}$, $\mathcal{B}\times \{0 \}$ and $\Gamma _{F}$ (the graph of $F$). \\
If $\mathcal{D}_{n+m}$ is the cell decomposition of $\mathbb{R}^{n}\times \mathbb{R}^{m}$ determined by $\mathcal{D}$ (means that $\pi (\mathcal{D})=\mathcal{D}_{n+m}$ ), then for any $C\in \mathcal{D}_{n+m}$ of maximal dimension, some of the bands over $C$ (at least one) are in $\mathcal{X}$ and the others in $\mathcal{X}^{c}$. Hence by Lemma~\ref{lemma1} we can find a cell decomposition $\mathcal{D}'_{n+m}$ finer than $\mathcal{D}_{n+m}$ such that for all $C\in \mathcal{D}'_{n+m}$ with $dim(C)=n+m$ there is some box $[a_{C},b_{C}]\subset \mathbb{R}_{+}^{\ast}$ such that: $C\times [a_{C},b_{C}] \subset \mathcal{X}$.\\

Now since $dim(\mathcal{B}_{x})<m$ for all $x\in \mathbb{R}^{n}$, we can apply Lemma~\ref{lemma1} again and find a partition $\mathcal{P}$ of $\mathbb{R}^{n}$ with the fact that for all $P\in \mathcal{P}$ there is a box $B_{P}=[v_{P}(1),v'_{P}(1)]\times ...\times [v_{P}(m),v'_{P}(m)]\subset \mathbb{R}^{m}$ such that $P\times B_{P}$ is included in some band $C_{P}$ of $\mathcal{D}'_{n+m}$, hence:

\begin{center}
 $P\times B_{P} \times [a_{C_{P}},b_{C_{P}}]\subset \mathcal{X}$.
\end{center}
Finally we can take $\mathbb{A}=\{ v_{P}=(v_{P}(1),...,v_{P}(m) ) \}_{P\in \mathcal{P}}$ and $\varepsilon _{0}=min _{P\in \mathcal{P}} \{ a_{C_{P}} \}$.

\end{proof}
Now we define $P_{X}(x,v,\varepsilon)$ as follows:

\begin{center}
$P_{X}(x,v,\varepsilon)$ is true $\Leftrightarrow$ $\pi _{v}$ is $(\varepsilon,p)$-weak regular at $x$ with respect to $X$. 
\end{center}
It's obvious that $P_{X}$ satisfies $(1)$ of the precedent lemma. Let's prove that also $(2)$ holds true for $P_{X}$. We have:

\begin{center}
	\begin{eqnarray*}
		\mathcal{X}&=&\{ (x,v,\varepsilon) \; ; \; \pi _{v} \; is \; \varepsilon -weak \; regular \; at \; x \; with \; respect \;  to X  \} \\
		&=&\mathcal{X}_{1}\cup \mathcal{X}_{2}	
	\end{eqnarray*}
\end{center}
where $\mathcal{X}_{1}$ and $\mathcal{X}_{2}$ are defined below.

\begin{center}
	\begin{eqnarray*}
	\mathcal{X}_{1}&=&\{ (x,v,\varepsilon) \; ; \; C_{\varepsilon}(x,v)\cap X=\emptyset  \} \\
		&=&\{ (x,v,\varepsilon) \; ; \; \forall t\in \mathbb{R}^{\ast} \; and \; \forall v' \; such \; that \; \parallel v-v' \parallel <\varepsilon \; :  \; x+t(v',1)\notin X  \}
	\end{eqnarray*}
\end{center}
Hence $\mathcal{X}_{1}$ is defined by a first order formula, and therefore $\mathcal{X}_{1}$ is a definable set.  
\begin{center}
$\mathcal{X}_{2}=\{ (x,v,\varepsilon) \; : \; such \; that \;  C_{\varepsilon}(x,v)\cap X\neq \emptyset \; ,C_{\varepsilon}(x,v)\cap X \subset Reg^{p}(X) \;,and \; \forall y \in C_{\varepsilon}(x,v)\cap X,\; there\; is\;  t\in \mathbb{R}^{\ast}\; and \; v'\in B(v,\varepsilon ) \; with \;  y=x+t(v',1) \;  and \; the \;  line \;  through \;  x \; directed \; by \; (v',1) \; is \; transverse \; to \; X \; at \; y \;  \}$. 
\end{center}
Hence $\mathcal{X}_{2}$ is also defined by a first order formula, thus it's a definable set. Finally $\mathcal{X}$ is definable set. Let's prove that $dim(\mathcal{B}_{x})<n-1$ for all $x\in \mathbb{R}^{n}$. Assume that this is not true. For $x\in \mathbb{R}^{n}$ and $v\in \mathbb{R}^{n-1}$ we denote by $L(x,v)$ the line that passes through $x$ and is  directed by $(v,1)$. For all $x\in \mathbb{R}^{n}$ we have:

\begin{center}
$\mathcal{B}_{x}=\mathcal{B}_{x}^{1} \cup \mathcal{B}_{x}^{2}$,
\end{center}
where: 

\begin{center}
$\mathcal{B}_{x}^{1}= \{v\in \mathbb{R}^{n-1} \; : \; such \; that \; L(x,v) \; is \; not \; transverse \; to \; Reg^{p}(X)     \}$
\end{center}
and: 

\begin{center}
$\mathcal{B}_{x}^{2}=\{ v\in \mathbb{R}^{n-1} \; : \;  L(x,v)\cap Sing^{p}(X)\neq \emptyset \;     \}$.
\end{center}
We define the definable map:

\begin{center}
$\phi : \mathbb{R}^{n-1}\times \mathbb{R}^{\ast}\longrightarrow \mathbb{R}^{n}$\\
$(v,t)\mapsto \phi
 (v,t)=x_{0}+t(v,1)$.
\end{center}
A simple calculation shows that $\phi$ is a submersion, hence it is a local diffeomorphism, but since $dim(Sing^{p}(X))<(n-1)$, we can deduce that: 

\begin{center}
 $dim(\phi ^{-1}(Sing^{p}(X)))<n-1$,
 \end{center} 
therefore this means that: 

\begin{center}
$dim(\pi (\phi ^{-1}(Sing^{p}(X))))<n-1$,
\end{center}
hence: 

\begin{center}
$dim(\mathcal{B}_{x}^{2})<n-1$.
\end{center}
But by our assumption there must be an $x_{0}\in \mathbb{R}^{n}$ such that : $dim(\mathcal{B}_{x_{0}}^{1})=n-1$, and 

\begin{center}
$\mathcal{B}_{x_{0}}^{1}=\{v\in \mathbb{R}^{n-1} \; : \;such \; that \; \phi (v,.) \; is \; not \; transverse \; to \; Reg^{p}(X)     \}$.
\end{center}
Finally, by Lemma~\ref{lemma2} we deduce that $dim(\mathcal{B}_{x_{0}}^{1})<n-1$, thus this is a contradiction with $dim(\mathcal{B}_{x_{0}}^{1})=n-1$. This complete the proof of Theorem~\ref{theorem2}.

\end{proof}

\begin{rmk}
\begin{itemize} 
\item[(i)] By the proof of Theorem~\ref{theorem2} we can require $\{v_{1},...,v_{k} \}$ to be from an open subset of $(\mathbb{R}^{n-1})^{k}$. 
\item[(ii)] If $dim(X)<(n-1)$, then we can see that $\mathcal{X}_{2}=\emptyset$, hence this means that we can find a finite set of projections $\{v_{1},...,v_{k}  \}\subset \mathbb{R}^{n-1}$ and $\varepsilon >0$ such that for any $x\in \mathbb{R}^{n}$ there is an $i$ such that:

\begin{center}
$C_{\varepsilon}(x,v_{i})\cap X=\emptyset$.
\end{center}
\end{itemize}
\end{rmk}

\begin{question} From the proof of Theorem~\ref{theorem2} we can minimize the number of the projections by: 
\begin{center}
$k_{m}=min\{k\in \mathbb{N} \; : \; k=\mid \mathcal{P} \mid \; where \; \mathcal{P} \; is \; a \; partition  \; of \; \mathbb{R}^{n} \; such \; that \; for \; all \; P\in \mathcal{P} \; there \; is \; a \; box \; B_{P}=[v_{P}(1),v'_{P}(1)]\times ...\times [v_{P}(m),v'_{P}(m)]\subset \mathbb{R}^{m} \;  such \; that \; P\times B_{P} \; is \; included \; in \; some \; \; band \; C_{P} \; of \; \mathcal{D}'_{n+m} \;
     \}$.
\end{center}
Clearly this number depends by $n$ and $X$, therefore we ask the question: \\
 Is there a way to prove that the number of projections can be chosen independently of $X$?
 
 \end{question}
 
\begin{question} Can the set of projections be chosen in an open definable dense subset of $(\mathbb{R}^{n-1})^{k}$?
\end{question}

\subsection{Counter example in non-polynomially bounded o-minimal structures :}
Fix $\mathcal{A}$ a non-polynomially bounded o-minimal structure on $(\mathbb{R},+,.)$. Assume that the Regular projection theorem is true for this structure, and consider the set $X$ defined by:
\begin{center}
$X=X_{1} \cup X_{2}$,
\end{center}

where:
\begin{center}
$X_{1}=\{(x,ax^{a+1},x^{a+1}) \; : \; x> 0 \; and \;  a \in \mathbb{R}  \}$\\
$X_{2}=\{(x,-ax^{a+1},x^{a+1}) \; : \; x> 0 \; and \;  a \in \mathbb{R}  \}$.
\end{center}
For all $p\in \mathbb{N}^{\ast}$ we have : $Reg^{p}(X)=X\setminus \{ (x,0,x) \; : \; x>0  \}$, and the connected components of $Reg^{p}(X)$ are $C^{p}$-manifolds.

By Miller's Dichotomy Theorem\cite{C2}, the graph of the exponential function $x \longrightarrow exp(x)$ is a definable set, and therefore $X$ is a definable set.\\
Take the germ of the curve $x(s) = (s, 0, 0)$. By cell decomposition and the assumption there are $\delta > 0$, a vector $v = (v_{1} , v_{2} ) \in \mathbb{R}^{2}$ , $\varepsilon > 0$ and $C >0$ such
that for all $s \in [0,\delta [$ we have one of the following two cases:

\begin{itemize}

\item[$(1)$] $C_{\varepsilon} (x(s), v) \cap X =\emptyset$ .

\item[$(2)$] $C_{\varepsilon} (x(s), v) \cap X=\sqcup A _{f_{s}^{i}}$, where $f _{s} ^{i} : B(v,\varepsilon) \longrightarrow \mathbb{R}^{\ast} $ are $C^{p}-$regular definable functions such that for all $s$,
$i$, and $\lambda \in B(v,\varepsilon)$ we have: 

\begin{center}
$\frac{\parallel grad \; f_{s}^{i}(\lambda) \parallel}{\mid f_{s}^{i}(\lambda) \mid }\leq C$.
\end{center}
We are interested in the second case, because for $s$  small enough $C_{\varepsilon} (x(s), v) \cap X \neq \emptyset$. To obtain a contradiction, let's prove the next two facts:

\end{itemize}

\textbf{Fact 1 :} For $\lambda =(\lambda _{1},\lambda _{2}) \in B(v,\varepsilon )$, the functions $s\mapsto f_{s}^{i}(  \lambda )$ are characterized by the functional equation:

\begin{center}
 $f_{s}^{i}(\lambda)=(s+\lambda _{1} f_{s}^{i}(\lambda))^{\pm \lambda _{2}+1}$.
\end{center}
Indeed, take $s\in [0,\delta [$. \\
Since $\; \; \; x(s)+f_{s}^{i}(\lambda)(\lambda,1) \in C_{\varepsilon}(x(s),v) \cap X$, there is some $x \in \mathbb{R}_{+}$ and  $a\in \mathbb{R}$ such that:
\begin{center}
$x(s)+f_{s}^{i}(\lambda)(\lambda,1)=(x,\pm ax^{a+1},x^{a+1})$.
\end{center}
Hence:
\begin{center}
$\; \; \; \; \; \; \; \; \; \; x=s+f_{s}^{i}(\lambda)\lambda _{1}$\\
$f_{s}^{i}(\lambda)=x^{a+1}$\\
$\; \; \; \; \; \; \; \; \pm ax^{a+1}=\lambda _{2}x^{a+1}$
\end{center}
and this implies:
\begin{center}
$\lambda _{2}=\pm a \; \; $ and $\; \; \; f_{s}^{i}(\lambda)=(s+\lambda _{1} f_{s}^{i}(\lambda))^{\pm \lambda _{2}+1}$.
\end{center}
But since the sets $A_{f_{s}^{i}}$ are connected definable $C^{p}$-manifolds and the connected components of $Reg^{p}(X)$ are $C^{p}$-definable manifolds we deduce that the sign $\pm$ on $\lambda _{2}$ depends only on $i$.\\
Reciprocally, take $\lambda \in B(v , \varepsilon )$ and 
take a definable continuous function $s\mapsto t_{s} \in \mathbb{R}^{\ast}_{+}$ such that:
\begin{center}
$t_{s}=(s+\lambda _{1}t_{s})^{\pm \lambda _{2}+1}$.
\end{center}
Fix $s\in [0,\delta[$, then we have:
\begin{center}
$x(s)+t_{s}(\lambda,1) \in C_{\varepsilon}(x(s),v)$.
\end{center}
Let's take
\begin{center}
$x=s+\lambda _{1}t_{s}$ and $a=\pm \lambda _{2}$.
\end{center}
Then
\begin{center}
$x(s)+t_{s}(\lambda,1)=(x,\pm ax^{a+1},x^{a+1})$.
\end{center}
Hence
\begin{center}
 $x(s)+t_{s}(\lambda,1) \in C_{\varepsilon}(x(s),v) \cap X$.
 \end{center}
This implies that there is $i_{s}$ and $\lambda ' \in B(v,\varepsilon )$ such that:
\begin{center}
$x(s)+t_{s}(\lambda,1)=x(s)+f_{s}^{i_{s}}(\lambda ')(\lambda ',1)$.
\end{center}
And this implies that $\lambda=\lambda '$ and $t_{s}=f_{s}^{i_{s}}(\lambda)$.\\
By the fact that $f_{s}^{1}<....<f_{s}^{k}$, and the functions $s\mapsto t_{s}$ and $s\mapsto f_{s}^{i}(\lambda )$ are continuous and definable, we deduce that $i_{s}$ doesn't depend on $s$, hence $s\mapsto t_{s}$ is one of the functions $s\mapsto f_{s}^{i}(\lambda )$. \\
This finishes the proof of \textbf{Fact 1}.

\vspace*{0.5cm}
\textbf{Fact 2:} 

\begin{itemize}

\item[($\bullet$)] If $\lambda =(\lambda _{1}, \lambda _{2}) \in B(v,\varepsilon )$ such that  $\lambda _{2} > 0$ , then there is a solution $s\mapsto t_{s}$ of the equation:
\begin{center}
$t_{s}=(s+\lambda _{1}t_{s})^{\lambda _{2}+1}$
\end{center}
such that $lim _{s\mapsto 0} t_{s}=0$.\\
For that, we define the function $F:\mathbb{R}^{2}\longrightarrow \mathbb{R}$ by:
\begin{center}
$F(s,t)=\left\{
                        \begin{array}{ll}
                        t-(s+\lambda _{1}t)^{\lambda _{2}+1} \; \; \; \; \; \;  if \; s+\lambda _{1}t > 0 \\
                        t \; \; \; \; \; \; \; \; \; \; \; \; \; \; \; \; \; \; \; \; \; \; \; \; \; \; \; \; \; \; \;  if \; s+\lambda _{1}t \leqslant  0. \\
                        \end{array}
                        \right.$
\end{center}
$F$ is a $C^{1}$ function and we have:

\begin{center}

                       $ F(0,0)=0$  and $  \frac{\partial F}{\partial t}(0,0)=1$.

\end{center}
Hence by the Implicit Function theorem, there is a continuous function $s\mapsto t(s)$ such that $t(0)=0$ and $F(s,t(s))=0$.\\

 \item[($\bullet$)] If $\lambda \in B(v,\varepsilon )$ such that  $\lambda _{2} <0 $ , then there is a solution $s\mapsto t_{s}$ for the equation :
\begin{center}
$t_{s}=(s+\lambda _{1}t_{s})^{-\lambda _{2}+1}$
\end{center}
such that $lim _{s\mapsto 0} t_{s}=0$. We use the same argument as in the first case by applying the Implicit Function theorem to the function: 

\begin{center}
$F(s,t)=\left\{
                        \begin{array}{ll}
                        t-(s+\lambda _{1}t)^{-\lambda _{2}+1} \; \; \; \; \; \;  if \; s+\lambda _{1}t > 0 \\
                        t \; \; \; \; \; \; \; \; \; \; \; \; \; \; \; \; \; \;  \; \; \; \; \; \; \; \; \; \; \; \; \; \; \;  if \; s+\lambda _{1}t \leqslant  0. \\
                        \end{array}
                        \right.$
\end{center}
\end{itemize}

\vspace{0.8cm}

 Now we will discuss the projection in two cases:
 
 \begin{itemize}
 
\item[case 1 :] Assume that $v_{2}\geqslant 0$, and take $\lambda =(\lambda _{1},\lambda _{2})\in B(v,\varepsilon)$ such that $\lambda _{2}>0$. From \textbf{Facts 1 and 2} there is an $i_{0}$ such that $lim_{s\rightarrow 0}f_{s}^{i_{0}}(\lambda)=0$ and:

\begin{center}
$f_{s}^{i_{0}}(\lambda)=(s+\lambda _{1} f_{s}^{i_{0}}(\lambda))^{\lambda _{2}+1}$.
\end{center}
Therefore we have:
\begin{center}
$\frac{\partial f_{s}^{i_{0}}}{\partial \lambda _{2}}(\lambda )=(ln(s+\lambda _{1} f_{s}^{i_{0}}(\lambda))+\frac{(\lambda _{2}+1)\lambda _{1}\frac{\partial f_{s}^{i_{0}}}{\partial \lambda _{2}}(\lambda )}{s+\lambda _{1} f_{s}^{i_{0}}(\lambda)})f_{s}^{i_{0}}(\lambda) $.
\end{center}
Then we have:
\begin{center}
$\frac{\frac{\partial f_{s}^{i_{0}}}{\partial \lambda 2}(\lambda )}{f_{s}^{i_{0}}(\lambda)}(1-(\lambda _{2}+1)\lambda _{1}(s+\lambda _{1}f_{s}^{i_{0}}(\lambda))^{\lambda _{2}})=ln(s+\lambda _{1}f_{s}^{i_{0}}(\lambda))$.
\end{center}
Hence, since $\lambda _{2} > 0$ and $lim_{s\mapsto 0}f_{s}^{i_{0}}(\lambda)=0$, we deduce that:

\begin{center}
$\vert \frac{\frac{\partial f_{s}^{i_{0}}}{\partial \lambda 2}(\lambda )}{f_{s}^{i_{0}}(\lambda)} \vert \longrightarrow \infty \; \; $ when $\; \; s\longrightarrow 0$.
\end{center}
Thus, this is a contradiction with the fact that: $\vert \frac{\frac{\partial f_{s}^{i_{0}}}{\partial \lambda 2}(\lambda )}{f_{s}^{i_{0}}(\lambda)} \vert \leq C \; \; \forall s $. 
\\

\item[case 2 :] Assume that $v_{2}<0 $, and take $\lambda =(\lambda _{1},\lambda _{2})\in B(v,\varepsilon)$ such that $\lambda _{2}<0$. From \textbf{Fact 1 and 2} there is an $i_{0}$ such that $lim_{s\rightarrow 0}f_{s}^{i_{0}}(\lambda)=0$ and:

\begin{center}
$f_{s}^{i_{0}}(\lambda)=(s+\lambda _{1} f_{s}^{i_{0}}(\lambda))^{-\lambda _{2}+1}$
\end{center}
We have:
\begin{center}
$\frac{\partial f_{s}^{i_{0}}}{\partial \lambda 2}(\lambda )=(-ln(s+\lambda _{1} f_{s}^{i_{0}}(\lambda))+\frac{(-\lambda _{2}+1)\lambda _{1}\frac{\partial f_{s}^{i_{0}}}{\partial \lambda 2}(\lambda )}{s+\lambda _{1} f_{s}^{i_{0}}(\lambda)})f_{s}^{i_{0}}(\lambda) $
\end{center}
Then:
\begin{center}
$\frac{\frac{\partial f_{s}^{i_{0}}}{\partial \lambda 2}(\lambda )}{f_{s}^{i_{0}}(\lambda)}(1-(-\lambda _{2}+1)\lambda _{1}(s+\lambda _{1}f_{s}^{i_{0}}(\lambda))^{-\lambda _{2}})=-ln(s+\lambda _{1}f_{s}^{i_{0}}(\lambda))$
\end{center}
Hence, since $- \lambda _{2} > 0$ and $lim_{s\mapsto 0}f_{s}^{i_{0}}(\lambda)=0$, we deduce that:

\begin{center}
$\vert \frac{\frac{\partial f_{s}^{i_{0}}}{\partial \lambda 2}(\lambda )}{f_{s}^{i_{0}}(\lambda)} \vert \longrightarrow \infty \; \; $ when $\; \; s\longrightarrow 0$
\end{center}
Hence this is a contradiction with the fact that: $\vert \frac{\frac{\partial f_{s}^{i_{0}}}{\partial \lambda 2}(\lambda )}{f_{s}^{i_{0}}(\lambda)} \vert \leq C \; \; \forall s$. 

 \end{itemize}

\begin{flushright}
$\square$
\end{flushright}

\subsection{Regular projection theorem in polynomially bounded structures. }

Let $\mathcal{A}$ be an o-minimal structure on $\mathbb{R}$ and let $\mathcal{K}$ be the field of exponents of $\mathcal{A}$, means that:

\begin{center}
$\mathcal{K}=\{r\in \mathbb{R} \; : \; x \mapsto x^{r} \; is \; definable \; in \; \mathcal{A} \;      \}$.
\end{center}
 Before beginning the proof of the theorem we recall a few results:

\begin{lem}\label{lemma4} \textbf{(Piecewise Uniform Asymptotics).} Assume that $\mathcal{A}$ is polynomially bounded. Let $f:\mathbb{R}^{m}\times \mathbb{R}\longrightarrow \mathbb{R}$ be definable. Then there is a finite $S\subset \mathcal{K}$ such that for each $y\in \mathbb{R}^{m}$ either the function $t\mapsto f(y,t)$ is ultimately equal to $0$ or there exists $r\in S$ such that $lim _{t\mapsto +\infty}\frac{f(y,t)}{t^{r}}\in \mathbb{R}^{\ast}$.

\end{lem}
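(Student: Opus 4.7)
The plan is to exploit Miller's one-variable growth dichotomy for polynomially bounded o-minimal structures together with definable cell decomposition to extract a single finite set of exponents that works uniformly in the parameter $y$.

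First, for fixed $y \in \mathbb{R}^m$ the one-variable function $g_y(t) := f(y,t)$ is definable in $\mathcal{A}$; by o-minimality $g_y$ is ultimately continuous and monotone, hence either ultimately zero or ultimately of constant sign. In the second case, Miller's growth dichotomy combined with polynomial boundedness of $\mathcal{A}$ yields a unique exponent $r(y) \in \mathcal{K}$ such that $\lim_{t \to \infty} g_y(t)/t^{r(y)} \in \mathbb{R}^{\ast}$, and the assignment $y \mapsto r(y)$ is a definable partial function $r : D \to \mathcal{K}$ on the definable set $D := \{y \in \mathbb{R}^m : g_y \text{ is not ultimately zero}\}$. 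In particular the graph of $r$ is a definable subset of $\mathbb{R}^m \times \mathcal{K}$, and its image $r(D) \subset \mathbb{R}$ is a definable subset of $\mathbb{R}$.

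It remains to show that $r(D)$ is finite. To this end I would apply cell decomposition to $\mathbb{R}^m \times \mathbb{R}$ compatible with $\{f = 0\}$ and with enough auxiliary sets so that, on every cell $C$ whose $t$-fibers are unbounded above, $f$ admits a uniform asymptotic factorization of the form $f(y,t) = t^{r_C} \varphi_C(y) u_C(y,t)$, with $r_C \in \mathcal{K}$, $\varphi_C$ a definable non-vanishing function of $y$ alone, and $u_C(y,t) \to 1$ as $t \to \infty$ for each $y$ in the projection $\pi(C) \subset \mathbb{R}^m$. Then for every $y \in \pi(C)$ the one-variable exponent $r(y)$ equals $r_C$, so taking $S$ to be the finite collection of all such $r_C$ as $C$ ranges over the relevant top-dimensional cells (together with those coming from lower-dimensional cells handled by induction) yields the lemma.

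The main obstacle is establishing the uniform asymptotic factorization on cells, i.e.\ showing that the exponent $r(y)$ depends only on which cell $y$ belongs to and not on $y$ itself. This is the essential content of Miller's results cited as \cite{C1}. I would argue it by induction on $m$: the base case $m = 0$ is exactly Miller's one-variable theorem; for the inductive step, after cell decomposition one reduces to $f$ continuous on a top-dimensional cell $C$, applies the one-variable theorem at a generic slice $y_0 \in \pi(C)$ to obtain a candidate exponent $r_C \in \mathcal{K}$, and then uses o-minimality of the image $r(D)$ together with definable choice to propagate $r(y) = r_C$ to a dense open subset of $\pi(C)$; refining the cell decomposition absorbs the remaining lower-dimensional stratum into an inductive application. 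The finitely many values of $r$ thus produced give the desired finite set $S \subset \mathcal{K}$.
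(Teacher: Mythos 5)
The paper offers no proof of this lemma at all: it is quoted verbatim from Miller and the ``proof'' is the citation [\cite{C1}, Proposition 5.2]. Your proposal attempts to actually prove it, which is commendable, but the argument has a genuine gap at exactly the point where the lemma has content.

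The first problem is your claim that $y\mapsto r(y)$ is a definable partial function, i.e.\ that $\{(y,r) : \lim_{t\to+\infty} f(y,t)/t^{r}\in\mathbb{R}^{\ast}\}$ is a definable set. Writing this as a first-order formula requires a \emph{uniformly} definable family of power functions $(t,r)\mapsto t^{r}$ indexed by $r$, and no such family exists in general: each individual $t\mapsto t^{r}$ with $r\in\mathcal{K}$ is definable, but the two-variable power map is not (its definability on $\{t>1\}$ times an interval of exponents would yield the exponential by fixing $t$, contradicting polynomial boundedness). So you cannot speak of ``o-minimality of the image $r(D)$.'' And even if $r(D)$ were definable, a definable subset of $\mathbb{R}$ is a finite union of points \emph{and intervals}; since the field of exponents $\mathcal{K}$ can be all of $\mathbb{R}$ (e.g.\ $\mathbb{R}_{\mathrm{an}}$ expanded by all real power functions), finiteness of $r(D)$ would still not follow from o-minimality alone.

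The second, more fundamental problem is that the step which would close the argument --- the ``uniform asymptotic factorization'' $f(y,t)=t^{r_{C}}\varphi_{C}(y)u_{C}(y,t)$ on each cell, equivalently the local constancy of the exponent $r(y)$ in the parameter --- is precisely the content of Miller's Proposition 5.2. Your sketch of it (``apply the one-variable theorem at a generic slice $y_{0}$, then use o-minimality of $r(D)$ and definable choice to propagate $r(y)=r_{C}$'') is circular: the propagation is the thing to be proved, and the definability it leans on is the unavailable item from the previous paragraph. Cell decomposition plus the one-variable growth dichotomy, applied fiberwise, genuinely do not suffice here; one needs the additional uniformity analysis that Miller carries out. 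As written, your proposal reduces the lemma to an unproved claim equivalent to the lemma itself, so you should either supply that analysis or, as the paper does, cite [\cite{C1}, Proposition 5.2] outright.
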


\begin{proof}
See [\cite{C1}, Proposition5.2]
\end{proof}

\begin{lem}\label{lemma5} Let $ U\subset \mathbb{R}^{k}$  be a non empty definable open set and $M:U\times ]0,\alpha[ \longrightarrow \mathbb{R}$ be a $C^{1}$ definable map. Suppose there exists $K>0$ such that $\mid M(y,t) \mid \leq K$, for $(y,t)$. Then there is a closed definable subset $F$ of $U$ with $dim(F)<dim(U)$ and continuous definable functions $C, \tau : U\setminus F \longrightarrow \mathbb{R}^ {\ast} _{+}$, such that for all $y\in U \setminus F$ we have:
\begin{center}
$\parallel D_{y}M(y,t) \parallel \leq C(y)$ for all $t\in ]0,\tau (y)[$. 
\end{center}
\end{lem}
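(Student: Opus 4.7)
The plan is to apply Lemma~\ref{lemma4} to each partial derivative $\partial_{y_i}M$, after the substitution $s=1/t$, so that the $t\to 0^+$ regime corresponds to the $s\to+\infty$ regime of that lemma. This yields, for each $i\in\{1,\dots,k\}$, a finite exponent set $S_i\subset\mathcal{K}$ such that at every $y\in U$, either $\partial_{y_i}M(y,t)$ vanishes for $t$ near $0$, or $\partial_{y_i}M(y,t)\sim c_i(y)\,t^{-r_i(y)}$ as $t\to 0^+$ for some definable functions $r_i(y)\in S_i$ and $c_i(y)\in\R^{\ast}$. The proof then reduces to showing that the bad locus $F_0:=\bigcup_i\{y\in U:r_i(y)>0\}$ has dimension strictly less than $k$, since on its complement every $\partial_{y_i}M(y,\cdot)$ stays bounded near $0$.

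Suppose for contradiction that $\dim\{y\in U:r_i(y)>0\}=k$ for some $i$. Since $S_i$ is finite, cell decomposition produces a non-empty open $V\subset U$ on which $r_i\equiv r_0$ is a fixed positive constant and $c_i$ is continuous and of constant sign; up to a sign change assume $c_i>0$ on $V$. The definable function
\[
\tau^{\ast}(y)\,:=\,\sup\bigl\{\tau>0:\partial_{y_i}M(y,t)\geq\tfrac{1}{2} c_i(y)\,t^{-r_0}\text{ for all }t\in(0,\tau)\bigr\}
\]
is strictly positive on $V$ by the pointwise asymptotic. Refining $V$ once more so that $\tau^{\ast}$ is continuous, I fix a closed ball $\overline{B(y_0,r)}\subset V$ on which $c_i\geq c_0>0$ and $\tau^{\ast}\geq\tau_0>0$. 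For $0<h<r$ and $t\in(0,\tau_0)$ the mean-value theorem gives
\[
M(y_0+he_i,t)-M(y_0,t)\,=\,\int_0^h\partial_{y_i}M(y_0+se_i,t)\,ds\,\geq\,\frac{c_0\,h}{2}\,t^{-r_0},
\]
whose right-hand side tends to $+\infty$ as $t\to 0^+$, contradicting $|M|\leq K$. Hence $\dim F_0<k$.

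On $U\setminus F_0$ each partial $\partial_{y_i}M(y,\cdot)$ is bounded in some right-neighbourhood of $0$, so the definable function $C_{\ast}(y):=\limsup_{t\to 0^+}\|D_yM(y,t)\|$ is finite there. Set $C(y):=C_{\ast}(y)+1$ and $\tau(y):=\sup\{\tau>0:\|D_yM(y,t)\|\leq C(y)\text{ for all }t\in(0,\tau)\}$; both are definable and strictly positive. A final cell decomposition of $U\setminus F_0$ compatible with the graphs of $C$ and $\tau$ produces a definable subset of dimension $<k$ off which both functions are continuous. Taking $F$ to be the closure in $U$ of the union of $F_0$ with this exceptional set yields the required closed definable subset of dimension strictly less than $\dim U$.

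The main difficulty lies in the second step: upgrading the pointwise limit $t^{r_0}\partial_{y_i}M(y,t)\to c_i(y)$ to a lower bound that is uniform over a full ball in $y$ for a common interval of $t$, so that the mean-value integral can be bounded from below. This is exactly where the polynomial-boundedness hypothesis enters, through Lemma~\ref{lemma4}; without it the exponent set $S_i$ need not be finite and the whole reduction breaks down, consistent with the counterexample of the previous subsection.
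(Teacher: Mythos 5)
The paper does not actually prove this lemma: it is quoted from Loi's paper on Verdier stratifications (\cite{L1}, Lemma 1.8), so the only "proof" given is a citation. Your core argument is the standard one and it is sound: if some partial derivative $\partial_{y_i}M(\cdot,t)$ admitted a lower bound blowing up as $t\to 0^{+}$, uniformly over a ball in $y$ and over a common interval $t\in(0,\tau_0)$, then integrating along a segment parallel to $e_i$ would force $M(y_0+he_i,t)-M(y_0,t)\to+\infty$, contradicting $\mid M\mid\leq K$; and the final bookkeeping (definability of $C$ and $\tau$, continuity off a closed set of smaller dimension via cell decomposition, stability of dimension under closure) is fine.

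The one genuine defect is where you get the uniformity from. You route it through Lemma~\ref{lemma4} (Miller's piecewise uniform asymptotics), which is only available in polynomially bounded structures, and you then assert that polynomial boundedness is essential here. It is not: the statement carries no such hypothesis, it is placed before the blanket assumption that $\mathcal{A}$ is polynomially bounded, and Loi proves it in an arbitrary o-minimal structure. Your closing remark conflates this lemma (a bound of $\parallel D_{y}M\parallel$ by a constant $C(y)$) with the much stronger estimate $\parallel grad(f)\parallel\leq C\mid f\mid$ of the regular projection theorem, which is what the counterexample of the preceding subsection actually defeats. The fix is easy and makes the detour through Lemma~\ref{lemma4} unnecessary: on a putative bad open set, the Monotonicity Theorem (Lemma~\ref{mono}) applied to $t\mapsto\partial_{y_i}M(y,t)$ shows the limit as $t\to 0^{+}$ exists in $[-\infty,+\infty]$, so after a definable partition you may assume $\partial_{y_i}M(y,t)\to+\infty$ for every $y$ in a nonempty open $V$; then for a fixed $N>2K/h$ the function $\theta(y):=\sup\{\tau>0 : \partial_{y_i}M(y,t)>N \text{ for all } t\in(0,\tau)\}$ is definable and strictly positive, hence bounded below by some $\tau_0>0$ on a closed ball after discarding a lower-dimensional set, and the same integration gives $2K\geq M(y_0+he_i,t)-M(y_0,t)\geq Nh>2K$, a contradiction. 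With that replacement your argument proves the lemma in full generality; as written it only establishes the polynomially bounded case, which, to be fair, is the only case in which the paper uses it.
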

 
 \begin{proof} See [\cite{L1}, Lemma1.8]  
\end{proof}

\begin{lem}\label{mono} \textbf{(Monotonicity theorem).}
Let $f:]a,b[\longrightarrow \mathbb{R}$ be a definable map. Then there are points $a=a_0 < a_1 <..<a_k =b$ such that $f\restriction_ ]a_i , a_i+1[ $ is either constant or strictly monotone on $]a_i , a_i+1[$ for each $i=0,...,n-1$. 
\end{lem}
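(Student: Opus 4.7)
I reduce the problem to a continuous $f$ by cell decomposition of the graph, then on each continuity interval I introduce a definable ``local type'' function and show that only three types can carry any interior.

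\textbf{Step 1 (continuity).} Apply the cell decomposition theorem to the graph $\Gamma_f \subset \mathbb{R}^2$. Only $0$-dimensional cells and $1$-dimensional cells of the form $\Gamma_\phi$ (with $\phi$ continuous and definable on an open subinterval) can lie inside a function graph, so projecting onto the first coordinate partitions $(a,b)$ into finitely many points and open intervals on each of which $f$ agrees with a continuous function. It thus suffices to treat each continuity interval, so assume henceforth that $f$ is continuous on $(a,b)$.

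\textbf{Step 2 (three good subsets).} Define the definable sets
\[ U_c = \{x : f \text{ is constant near } x\}, \quad U_+ = \{x : f \text{ is strictly increasing near } x\}, \quad U_- = \{x : f \text{ is strictly decreasing near } x\}, \]
each specified by a first-order formula, so by axiom $(4)$ each is a finite union of points and open intervals. On any maximal open interval contained in one of $U_c, U_+, U_-$, continuity plus connectedness upgrades the local behavior to a global one over the whole interval (for example, $f$ is globally constant on each maximal interval of $U_c$, globally strictly increasing on each maximal interval of $U_+$, etc.). It therefore remains to prove that $E := (a,b) \setminus (U_c \cup U_+ \cup U_-)$ is finite.

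\textbf{Step 3 (the main obstacle: $E$ is finite).} Suppose for contradiction that $E$ contains an open interval $J$. For each $x \in J$, the three definable sets $\{y \in (x,b) : f(y) > f(x)\}$, $\{y \in (x,b) : f(y) = f(x)\}$, $\{y \in (x,b) : f(y) < f(x)\}$ partition $(x,b)$, so by axiom $(4)$ exactly one of them contains an interval of the form $(x, x+\delta)$; this defines a right-type $\sigma_+(x) \in \{+, 0, -\}$, and symmetrically a left-type $\sigma_-(x)$. The definable map $\sigma = (\sigma_-, \sigma_+) : J \to \{+,0,-\}^2$ has nine level sets, each a finite union of points and intervals. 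The three ``compatible'' levels $(-,+), (+,-), (0,0)$ are contained in $U_+, U_-, U_c$ respectively and hence meet $J \subset E$ only in finite sets. The heart of the argument is to rule out open subintervals of the six remaining ``incompatible'' types using continuity and the intermediate value theorem: for instance, type $(+,+)$ would make every point of such a subinterval $J'$ a strict local maximum of $f$, but the continuous function $f$ restricted to any closed subsubinterval $[c,d] \subset J'$ attains its maximum at some interior point $x^* \in (c,d)$, and picking a second strict local maximum $x' \in J'$ close to $x^*$ with $f(x') < f(x^*)$ forces, by IVT between $x'$ and $x^*$, a local minimum between them, contradicting the assumption that every point of $J'$ is a strict local maximum. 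The other five incompatible types are handled by analogous IVT arguments. Once $E$ is known to be finite, combining its points with the endpoints of the finitely many maximal intervals making up $U_c, U_+, U_-$ yields the desired partition of $(a,b)$.
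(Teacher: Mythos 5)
The paper does not actually prove this lemma; it simply cites Coste and van den Dries. Your proposal is a genuine proof attempt, and Steps 2--3 are essentially the standard ``globalization'' half of the classical argument: the local-behaviour sets $U_c, U_+, U_-$ are definable, o-minimality plus a connectedness/sup argument upgrades local monotonicity to global monotonicity on each maximal interval, and the exceptional set is killed by analysing the definable left/right type function $\sigma$ and ruling out intervals of incompatible type via extreme-value/IVT arguments. That part is correct in outline (two small imprecisions: a single point of type $(-,+)$ need not lie in $U_+$ --- what you actually need, and what is true, is that an open \emph{interval} of type $(-,+)$ is contained in $U_+$ by the same sup argument as in Step 2; and the maximum of $f$ on $[c,d]$ need not be attained at an interior point, though the contradiction goes through regardless once one notes $f$ cannot be constant there). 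Also note that four of your six ``incompatible'' types, namely those containing a $0$ in one coordinate, are ruled out even more cheaply: if $\sigma_-(x)=0$ on an interval then $f$ is constant just to the left of each of its points, which forces $\sigma_+=0$ at nearby points, so no IVT is needed for those.

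The genuine weak point is Step 1. In the standard development of o-minimality (precisely the sources the paper cites), the Monotonicity Theorem is the \emph{first} nontrivial theorem and is the key ingredient in the proof of cell decomposition; deriving piecewise continuity of $f$ from a cell decomposition of $\Gamma_f$ therefore makes your proof circular as a self-contained argument. The hard part of the classical proof is exactly what you skip: showing, \emph{without} assuming continuity, that every definable $f$ on an interval admits a subinterval on which it is constant or strictly monotone and continuous (van den Dries, Ch.~3), after which the globalization you describe finishes the job. Within the logic of this particular paper --- which states cell decomposition as a black box before the Monotonicity Theorem and cites both to the same references --- your derivation is a legitimate (and short) deduction of the latter from the former; but you should be explicit that you are taking cell decomposition as given rather than offering an independent proof, since otherwise the argument proves nothing.
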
 
\begin{proof}
See \cite{M} or \cite{VD}.
\end{proof}

\begin{lem}\label{curve selection} \textbf{(The curve selection Lemma).}
Let $A$ be a definable subset of $\mathbb{R}^{n}$ and $a\in \overline{A}\setminus A$. Let $p\in \mathbb{N}$. Then there exists a $C^p$ definable curve $\gamma : ]0,1[\longrightarrow A\setminus \{a\}$, such that $lim_{t\mapsto 0^+}\gamma (t)=a$. 
\end{lem}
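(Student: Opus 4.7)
The plan is to construct the curve by a definable selection followed by smoothing, bypassing any induction on dimension. Since $a\in \overline{A}\setminus A$, the definable distance function $r: A \to (0,\infty)$, $r(x)=\|x-a\|$, has image a definable subset of $(0,\infty)$ with $0$ in its closure but not in the image itself; by condition~(4) in the definition of an o-minimal structure, $r(A)$ is a finite union of points and intervals and must therefore contain some interval $(0,\delta)$. In particular, the definable family $\{A\cap r^{-1}(t)\}_{t\in (0,\delta)}$ has nonempty fibers.

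I would then invoke the standard definable choice (Skolem) theorem for o-minimal structures on $(\mathbb{R},+,\cdot)$ to obtain a definable section $\sigma:(0,\delta)\to A$ satisfying $r(\sigma(t))=t$; in particular $\sigma(t)\to a$ as $t\to 0^+$. Next, I would apply a $C^p$-cell decomposition of $\mathbb{R}\times \mathbb{R}^n$ compatible with the graph of $\sigma$: each coordinate of $\sigma$ is then piecewise $C^p$ on $(0,\delta)$ with at most finitely many breakpoints, so there exists $\delta'\in(0,\delta)$ such that $\sigma$ is $C^p$ on $(0,\delta')$. Reparametrizing by $\gamma(t):=\sigma(\delta' t)$ for $t\in(0,1)$ yields a $C^p$ definable curve into $A\setminus\{a\}$ (since $r(\gamma(t))=\delta' t>0$) with $\lim_{t\to 0^+}\gamma(t)=a$.

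The main obstacle is the invocation of definable choice, which is a standard auxiliary result for o-minimal structures but is not explicitly stated earlier in the paper; it is established by cell decomposition, selecting inductively on ambient dimension a definable canonical point in each cell (for instance, its barycenter). An alternative proof avoiding definable choice is a direct induction on $n$: take a cell $C\subset A$ with $a\in\overline{C}$ and analyze its inductive description as a graph $\Gamma_\phi$ or a band $\Gamma(\phi,\varphi)$ over a cell $B\subset \mathbb{R}^{n-1}$. The delicate subcase is $C=\Gamma_\phi$ with $\pi(a)\in\overline{B}\setminus B$: one applies the inductive hypothesis to the shrinking definable subsets $B_\varepsilon:=\{x\in B\,:\,|\phi(x)-a_n|<\varepsilon\}$, which still have $\pi(a)$ in their closure, and uses the monotonicity theorem (Lemma~\ref{mono}) to control $\phi$ along the lifted curve and ensure convergence to $a$ itself rather than to some spurious closure point of $C$ with the same projection.
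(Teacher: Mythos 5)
Your argument is correct, and it coincides with the standard proof of curve selection in the references the paper cites for this lemma (\cite{M}, \cite{VD}): the paper itself gives no proof beyond that citation, and the definable-choice argument you outline (a definable section of the fibers of $x\mapsto\|x-a\|$ over a small interval $(0,\delta)$, made $C^p$ near $0$ by one-variable cell decomposition) is exactly the textbook route. The only point worth stating explicitly is the definable choice theorem itself, which you correctly identify as the external ingredient; everything else (the interval $(0,\delta)\subset r(A)$ from o-minimality, the automatic convergence $\sigma(t)\to a$ from $\|\sigma(t)-a\|=t$, and the finite exceptional set for $C^p$-smoothness) is sound.
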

\begin{proof}
See \cite{M} or \cite{VD}.
\end{proof}
In all the rest of this section we assume that the o-minimal structure $\mathcal{A}$ is polynomially bounded. 

\begin{lem} \label{lemma6}

Take $\Omega$ a definable open neighborhood of $0$ in $\mathbb{R}\times \mathbb{R}^{m}$, and a definable function:

\begin{center}
$f:\Omega \longrightarrow \mathbb{R}$\\
$\; \; \; \; \; \; \; \; (x,y)\mapsto f(x,y)$
\end{center}
such that: $f^{-1}(0)\subset (\Omega \cap \mathbb{R}^{m})$ and $f$ is $C^{1}$ with respect to $y$ on $\Omega \setminus \mathbb{R}^{m}$. Then there exists a definable set $W\subset (\Omega \cap \mathbb{R}^{m})$ with $dim(W)<m$, such that for all $(0,y) \in \Omega \setminus W$ there is some $\varepsilon >0$ and $C>0$ such that we have on $B((0,y),\varepsilon )\cap (\Omega \setminus \mathbb{R}^{m})$:

 \begin{center}
 $\parallel D_{y}f \parallel \leqslant C \mid f \mid $.
  \end{center}

\begin{proof}

 We can assume that $f\geqslant 0$. Let's assume that the statement is not true, then we can find $O\subset (\Omega \cap \mathbb{R}^{m})$ with $dim(O)=m$ and for all $(0,y)\in O$ we have:

\begin{center}
 $ (\ast) \; \; \; \; \; \; \; \; \; \; \; \; \; \; \; \; lim _{x\mapsto 0^{+}}\frac{\parallel D_{y}f (x,y) \parallel}{\parallel f(x,y) \parallel}=+\infty    $.
\end{center}
Now we can find an open definable set $U\subset O$ and $\alpha >0$ such that $ ]0,\alpha [\times U \subset \Omega$. Since $f^{-1}(0)\subset (\Omega \cap \mathbb{R}^m)$,  by Lemma~\ref{lemma4} there exist an open set  $B\subset  U$ and $r\in \mathbb{R}$ such that:

\begin{center}
 $ f(x,y)=c(y)x^{r}+\phi (x,y)x^{r}$, for $y\in B$ and $\alpha ' >x>0$ for some $\alpha ' < \alpha $,
 \end{center} 
where $c$ is a definable function on $B$ with $c(y)\neq 0$ for all $y\in B$, $\phi$ definable such that $lim _{x\mapsto 0}\phi
 (x,y)=0$ for all $y$ in B. Therefore for any $y\in B$ and $x\in ]0,\alpha ' [$ we have:
 
\begin{center}
$\frac{\parallel D_{y}f (x,y) \parallel}{\parallel f(x,y) \parallel}=\frac{\parallel Dc(y)+D_{y}\phi (x,y)   \parallel}  {\parallel c(y)+\phi (x,y)     \parallel}\leqslant \frac{\parallel Dc(y) \parallel}{\parallel c(y)+\phi (x,y)     \parallel}+ \frac{\parallel D_{y}\phi (x,y) \parallel }{\parallel c(y)+\phi (x,y)     \parallel}$.
\end{center}
But by Lemma~\ref{lemma5} and the fact that $lim _{x\mapsto 0}\phi
 (x,y)=0$ for all $y$ in B, we can shrink $B$ and $]0,\alpha '[$ and assume that there is a definable function $y\mapsto M(y)>0$ such that for any $(x,y)\in  ]0,\alpha '[ \times B$ we have:

\begin{center}
$\parallel   D_{y}\phi (x,y)    \parallel \leqslant M(y)$
\end{center}
Hence for any $y\in B$: 

\begin{center}
  $lim _{x\mapsto 0^{+}}\frac{\parallel D_{y}f (x,y) \parallel}{\parallel f(x,y) \parallel} \leqslant \frac{\parallel Dc(y) \parallel}{\parallel c(y)     \parallel}+ \frac{M(y)}{\parallel c(y)\parallel} $
  \end{center}  
 and this contradicts $(\ast)$.

\end{proof}  
 
\end{lem}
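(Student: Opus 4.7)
My plan is to follow the strategy typical of polynomially bounded o-minimal reasoning: apply Miller's Piecewise Uniform Asymptotics (Lemma~\ref{lemma4}) to reduce $f$ to a normal form near the hyperplane $\{x=0\}$, and then differentiate and bound by hand, invoking Lemma~\ref{lemma5} to tame $D_y\phi$. I would argue by contradiction, assuming without loss of generality $f\geq 0$. If the conclusion fails, the definable set of "bad" $y$'s for which no admissible $(\varepsilon,C)$ exists at $(0,y)$ has dimension $m$, hence contains a nonempty open box. Using the hypothesis $f^{-1}(0)\subset \Omega\cap\R^m$, the definability of the ratio $(x,y)\mapsto \|D_y f(x,y)\|/|f(x,y)|$, and a cell decomposition, I would refine to an open box $O\subset \Omega\cap\R^m$ on which the pointwise blow-up
$$(\ast) \qquad \lim_{x\to 0^+}\frac{\|D_y f(x,y)\|}{|f(x,y)|} = +\infty$$
holds for every $y\in O$.

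Choose $\alpha>0$ and an open $U\subset O$ with $]0,\alpha[\times U\subset \Omega$, and apply Miller's theorem to the parametric family $x\mapsto f(x,y)$ as $x\to 0^+$. The "ultimately zero" alternative is excluded, so there is a finite set $S$ of exponents; after a further o-minimal refinement, one finds an open box $B\subset U$ and $r\in S$ such that
$$f(x,y) = c(y)\,x^r + \phi(x,y)\,x^r, \qquad y\in B,\; 0<x<\alpha',$$
with $c:B\to\R$ definable and nonvanishing and $\phi(x,y)\to 0$ as $x\to 0^+$ for each fixed $y\in B$. Differentiating in $y$ and estimating yields
$$\frac{\|D_y f(x,y)\|}{|f(x,y)|} \leq \frac{\|Dc(y)\|}{|c(y)+\phi(x,y)|} + \frac{\|D_y\phi(x,y)\|}{|c(y)+\phi(x,y)|}.$$

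After shrinking $B$ and $\alpha'$ so that $\phi$ is uniformly bounded on $]0,\alpha'[\times B$, Lemma~\ref{lemma5} supplies, off a definable subset of $B$ of dimension strictly less than $m$, a positive definable function $M(y)$ with $\|D_y\phi(x,y)\|\leq M(y)$ for all sufficiently small $x>0$. Letting $x\to 0^+$ with $y$ fixed, $\phi(x,y)\to 0$, so the right-hand side stays bounded by $(\|Dc(y)\|+M(y))/|c(y)|$, contradicting $(\ast)$.

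The step I expect to require the most care is the initial reduction: translating the two-dimensional failure "no ball works at $(0,y)$" into the cleaner one-dimensional pointwise blow-up $(\ast)$. This needs some o-minimal bookkeeping, using definability of the relevant $\limsup$ together with monotonicity to pass from "there exist sequences $(x_n,y_n)\to(0,y)$ on which the ratio diverges" to "for generic $y$ the ratio already diverges along $x\to 0^+$ with $y$ fixed". Once $(\ast)$ is available, the normal form from Miller and the derivative bound from Lemma~\ref{lemma5} make the rest a direct asymptotic computation.
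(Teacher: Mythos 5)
Your proposal follows essentially the same route as the paper: contradiction, reduction to the pointwise blow-up $(\ast)$ on a full-dimensional set, Miller's Piecewise Uniform Asymptotics to write $f(x,y)=c(y)x^{r}+\phi(x,y)x^{r}$ on a box, and Lemma~\ref{lemma5} to bound $D_{y}\phi$ so the ratio stays finite as $x\to 0^{+}$. The only difference is that you explicitly flag, and sketch how to justify, the passage from ``no $(\varepsilon,C)$ works at $(0,y)$'' to the fiberwise limit $(\ast)$, a step the paper's proof asserts without comment.
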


\begin{lem} \label{lemma7} Let $f: [0,\varepsilon [ \times \Omega \longrightarrow \mathbb{R}$ be a definable function, $C^{1}$ with respect to $y$ on $]0,\varepsilon [ \times \Omega$, where $\Omega$ is an open subset of $\mathbb{R} ^{m}$. Then we can find a definable subset $W\subset \Omega$ with $dim(W)<m$ and such that for every $y_0 \in \Omega \setminus W$ there are $r>0$, $\varepsilon '>0$, and $C>0$ such that we have: 

\begin{center}
$\parallel D_{y_{0}}f(x,y) \parallel \leqslant C \mid f(x,y) \mid $ for all $(x,y)\in  ]0,\varepsilon '[ \times B(y_0,r)$.
\end{center}
 \begin{proof}
 
   Take $Z=f^{-1}(0)$, and take $\mathcal{C}$ a cell decomposition of $  [0,\varepsilon [ \times \Omega$ compatible with $\{ 0 \} \times \Omega$ and $Z\cap   (]0,\varepsilon [ \times \Omega) $. Hence $\Omega$ is a finite disjoint union of cells of $\mathcal{C}$. Take $O$ one of these cells with dimension equal to $m$, the cells of dimension smaller than $m$ will be chosen to be in the set $W$. We discuss two cases:

\begin{itemize}

\item[case1:] There is a cell $B\subset Z$ of maximal dimension (i.e $dim(B)=m+1$) such that $O\subset   \overline{B}$. In this case at each point $(0,y)\in O$  we can find a neighborhood of $(0,y)$ in $ ]0,\varepsilon [ \times \Omega$, where $f\equiv 0$ on this neighborhood, hence in this neighborhood $\parallel D_{y}f \parallel \equiv 0$, then the result holds at $(0,y)$ for any $C>0$.
\item[case2:]  There is no cell $B\subset Z$ of maximal dimension such that $O\subset \overline{B}$. In this case by Lemma~\ref{lemma6} we can find a definable set $W_{O} \subset O$ with $dim(W_{O})<m$ and such that for all $(0,y)\in O\setminus W_{O}$ we can find $r_{y}>0$, $\varepsilon _{y}>0$, and $C_{y}>0$,  such that $\parallel D_{y}f \parallel \leqslant C_{y} \mid f \mid $ on $ ]0,\varepsilon _{y}[ \times B(y,r_{y})$.

\end{itemize}

For cells like in case.1 we define $W_{O}:=\emptyset$. So finally the set:

\begin{center}
$W:=(\bigcup _{O, \; with \; dim(O)=m}W_{O})\cup (\bigcup _{dim(O)<m}O)$
\end{center}
satisfies the required properties.

\end{proof}

\end{lem}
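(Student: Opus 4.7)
The plan is to reduce to Lemma~\ref{lemma6} via a cell-decomposition argument that separates two behaviors above the slice $\{x=0\}$: regions where $f$ vanishes identically on a neighborhood (so the estimate is trivial) and regions where the zero set of $f$ is concentrated on $\{x=0\}$ (so Lemma~\ref{lemma6} applies directly).

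First I would choose a $C^1$-cell decomposition $\mathcal{C}$ of $[0,\varepsilon[\times\Omega$ compatible with $\{0\}\times\Omega$ and with $Z:=f^{-1}(0)\cap(]0,\varepsilon[\times\Omega)$. This induces a partition of $\Omega$ into finitely many cells; I absorb all cells of dimension $<m$ into the bad set $W$, which contributes a definable subset of dimension $<m$. It remains to treat each $m$-dimensional cell $O\subset\Omega$ of the induced decomposition.

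For each such $O$ there are two cases. If there exists an $(m+1)$-dimensional cell $B\in\mathcal{C}$ with $B\subset Z$ and $O\subset\overline{B}$, then for every $y\in O$ a small product neighborhood of $(0,y)$ is contained in $B$, so $f$ vanishes identically there and the estimate holds trivially with any constant $C$. Otherwise, no maximal-dimensional $Z$-cell has $O$ in its closure, so the part of $Z$ sitting above $O$ is covered only by cells of dimension $\leq m$. In this case I would localize $f$: after removing a lower-dimensional subset $W_O\subset O$, for each $y_0\in O\setminus W_O$ one can find a box $]0,\varepsilon'[\times B(y_0,r)$ on which $f^{-1}(0)$ meets only the slice $\{0\}\times B(y_0,r)$. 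Applying Lemma~\ref{lemma6} to the restriction of $f$ then yields the required local estimate $\|D_yf\|\leq C|f|$ near $(0,y_0)$.

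The bad set is then $W=\bigcup_{\dim O<m}O\,\cup\,\bigcup_{O\text{ in the second case}}W_O$, a finite definable union of dimension $<m$. The main subtlety, and the only step that genuinely uses the cell-decomposition bookkeeping, is the reduction to Lemma~\ref{lemma6} in the second case: one must verify that over a dense open subset of $O$ the zero set of $f$ is locally contained in $\{x=0\}$. This rests on the fact that the absence of an $(m+1)$-cell $B\subset Z$ with $O\subset\overline{B}$ forces the relevant $m$-cells of $Z$ above $O$ to be graphs of definable functions $\phi(y)>0$; by continuity these stay bounded away from zero near each $y_0$ outside a lower-dimensional subset of $O$, so they can be avoided by shrinking $\varepsilon'$ further.
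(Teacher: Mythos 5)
Your proof is correct and follows essentially the same route as the paper: a cell decomposition of $[0,\varepsilon[\times\Omega$ compatible with $\{0\}\times\Omega$ and $Z=f^{-1}(0)$, the dichotomy between cells $O$ lying in the closure of an $(m+1)$-dimensional cell of $Z$ (where the bound is trivial) and the remaining cells (handled by Lemma~\ref{lemma6}), with the lower-dimensional cells absorbed into $W$. Your additional step in the second case --- verifying that after shrinking the box the zero set meets only the slice $\{x=0\}$, so that the hypothesis $f^{-1}(0)\subset\Omega\cap\mathbb{R}^m$ of Lemma~\ref{lemma6} actually holds --- is a point the paper leaves implicit, and your justification of it is sound.
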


\begin{defi}
Let $X$ be a definable subset of a definable manifold $N$, and $\Omega$ a definable open subset of $\mathbb{R}^{m}$. Take $f:X\times \Omega \longrightarrow \mathbb{R}$ , $(x,y)\mapsto f(x,y) $ a definable function and $C^{1}$ with respect to $y$. We say that $f$ is $X-$rectifiable with respect to $y$, if we can find a definable partition $\mathcal{P}$ of $X$, and $c>0$ such that for every $D\in \mathcal{P}$ there is a box $B_{D}\subset \Omega$ such that:

\begin{center}
$\forall x\in D$ and $\forall y\in B_{D}$ : $\parallel D_{y}f(x,y) \parallel \leqslant c \mid f(x,y) \mid $.
\end{center}
\end{defi}

\begin{rmk} It is obvious that if there is a finite definable cover $(X_{i})_{i}$ of $X$ such that for each $i$ $f_{\mid X_{i} \times \Omega}$ is $X_{i}-$rectifiable with respect to $y$, then $f$ is $X-$rectifiable with respect to $y$.
\end{rmk}

\begin{lem} \label{lemma8}

Let $X$ be a definable subset of $\mathbb{S}^{n}$, $\Omega$ be an open definable subset of $\mathbb{R}^{m}$, and $x_{0}\in \partial X$. Let $f:X\times \Omega  \longrightarrow \mathbb{R}$, $(x,y)\mapsto f(x,y)$ be a definable function, $C^{1}$ with respect to $y$. Then there is a neighborhood $U$ of $x_{0}$ in $\mathbb{S}^{n}$ such that $f_{\mid U\cap X \times \Omega}$ is $U\cap X$-rectifiable with respect to $y$ , recall that it means we can find $\alpha >0$, $c>0$, $\mathcal{C}$ a definable partition of $X$, and $\mathcal{B}$ a finite collection of boxes in $\Omega $ such that for every $C\in \mathcal{C}$  with $x_{0} \in \overline{C}$, there is $B_{C}\in \mathcal{B}$ such that:

\begin{center}
$\forall x\in C$ with $ d(x,\; x_{0}) <\alpha$, $\forall y\in B_{C}$ : $\parallel D_{y}f(x,y) \parallel \leqslant c \mid f(x,y) \mid $.
\end{center}
\end{lem}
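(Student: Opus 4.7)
The idea is to reduce to the one-variable statement Lemma~\ref{lemma7} by parametrising a neighbourhood of $x_0$ in $X$ by spherical coordinates centred at $x_0$, and then to apply the fibre-partition result Lemma~\ref{lemma1} (Nguyen) to produce a box in $\Omega$. First, via a definable $C^1$-chart of $\mathbb{S}^n$ around $x_0$, we may assume $X\subset\mathbb{R}^n$ and $x_0=0$. Consider the definable diffeomorphism $\Psi:(0,\infty)\times\mathbb{S}^{n-1}\to\mathbb{R}^n\setminus\{0\}$, $(t,\sigma)\mapsto t\sigma$, let $\tilde X:=\Psi^{-1}(X\cap B(0,\varepsilon_0)\setminus\{0\})$, and cell-decompose $\tilde X$ compatibly with the projection onto $\mathbb{S}^{n-1}$. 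Each resulting cell pushes forward to a subpiece of $X$ parametrised as $\{t\sigma:\sigma\in\Sigma,\ 0<t<\tau(\sigma)\}$, where $\Sigma\subset\mathbb{S}^{n-1}$ and $\tau>0$ are definable; only the pieces whose closure contains $0$ are relevant, and after further partitioning $\Sigma$ we may assume that $\tau$ is uniformly bounded below. It thus suffices, for each such piece, to find a box $B\subset\Omega$ together with constants $c,\alpha>0$ making the rectifiability bound hold on the piece intersected with $B(0,\alpha)$.

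For a fixed piece and $\sigma\in\Sigma$, the function $g_\sigma(t,y):=f(t\sigma,y)$ is definable and $C^1$ in $y$ on $(0,\tau(\sigma))\times\Omega$, so Lemma~\ref{lemma7} yields a definable $W_\sigma\subset\Omega$ with $\dim W_\sigma<m$ such that, for every $y\notin W_\sigma$, one has $\parallel D_y g_\sigma(t,y)\parallel\leq \kappa(\sigma,y)\,\mid g_\sigma(t,y)\mid$ on some $(0,\eta(\sigma,y))\times B(y,\rho(\sigma,y))$, with $\kappa,\rho,\eta$ chosen definably in $(\sigma,y)$. Setting $W:=\{(\sigma,y)\in\Sigma\times\Omega:y\in W_\sigma\}$, the fibres $W_\sigma$ have dimension $<m$ for every $\sigma$, so Lemma~\ref{lemma1} (applied with $\Delta=W$, $C=\Sigma$, and an initial box in $\Omega$) partitions $\Sigma$ into finitely many pieces $\Sigma_j$ and pairs each with a box $B_j\subset\Omega$ such that $(\Sigma_j\times B_j)\cap W=\emptyset$.

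To pass from these pointwise local estimates to a uniform bound on a subbox, I would enlarge $W$ by the definable locus where $\kappa$ is unbounded or $\eta$ degenerates to zero, verify that the fibres of this enlarged bad set still have dimension $<m$ (via cell decomposition of the graphs of $\kappa$ and $\eta$), and reapply Lemma~\ref{lemma1} to obtain a subbox on which $\kappa$ and $\eta$ are controlled. Collecting the resulting pieces over all cells of $X$ with $0$ in their closure and all their spherical subdivisions yields the required finite partition $\mathcal{C}$ and collection $\mathcal{B}$ of boxes; uniform $c$ and $\alpha$ arise by taking worst values across the finitely many pieces. The main obstacle is precisely this uniformity step: Lemma~\ref{lemma7} only produces pointwise constants that could a priori blow up along the chosen box, and the argument succeeds only because this degeneracy locus is itself ``small in the fibre direction'' and thus amenable to Nguyen's lemma.
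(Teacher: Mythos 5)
There is a genuine gap, and it sits exactly where you locate it yourself: the uniformity step. Applying Lemma~\ref{lemma7} separately to each radial slice $g_\sigma(t,y)=f(t\sigma,y)$ only yields, for each fixed $\sigma$ and each $y$ outside a bad set $W_\sigma$ of dimension $<m$, \emph{some} constants $\kappa(\sigma,y)$, $\eta(\sigma,y)$, $\rho(\sigma,y)$. Lemma~\ref{lemma1} then removes the set where these constants fail to exist, but the conclusion of the lemma requires a single $c$ and a single $\alpha$ valid for all $x$ in a piece $C$ with $x_0\in\overline C$ and all $y$ in a box $B_C$; nothing in your construction prevents $\kappa$ from being finite everywhere on $\Sigma_j\times B_j$ yet unbounded (e.g.\ blowing up as $\sigma$ approaches the frontier of $\Sigma_j$, whose rays still accumulate at $x_0$ and hence cannot be discarded), nor $\eta(\sigma,y)$ from tending to $0$. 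Your proposed repair --- enlarge $W$ by ``the locus where $\kappa$ is unbounded or $\eta$ degenerates'' and check its fibres have dimension $<m$ --- is asserted, not proved, and it is essentially equivalent to the statement being proved: the whole content of the lemma is that the ratio $\parallel D_yf\parallel/\mid f\mid$ is uniformly controlled on a full-dimensional approach region to $x_0$, not merely along each ray. (Two smaller problems: $\kappa,\eta,\rho$ are not canonically defined, so you would first need definable choice to even speak of their graphs; and ``after further partitioning $\Sigma$ we may assume $\tau$ is uniformly bounded below'' is false when $\tau(\sigma)\to 0$ near $\partial\Sigma$, since those thin pieces still contain $x_0$ in their closure and cannot be dropped.)

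The paper avoids this uniformization problem entirely. It inducts on $m$; for $m=1$ it argues by contradiction: if no neighbourhood of $x_0$ works, then after using the Monotonicity Theorem (Lemma~\ref{mono}) and Lemma~\ref{lemma1} to arrange that $y\mapsto g(x,y)=\parallel D_yf(x,y)\parallel/\mid f(x,y)\mid$ is monotone of a fixed type on a fixed box $B_C$ for all $x$ in a cell, unboundedness of $g$ forces, via the Curve Selection Lemma (Lemma~\ref{curve selection}), a single definable curve $t\mapsto(x(t),y(t))$ along which $g\to\infty$; monotonicity in $y$ transfers the blow-up to a \emph{fixed} $y$ in a neighbouring box, and Lemma~\ref{lemma7} applied along the one-parameter family $t\mapsto x(t)$ gives the contradiction. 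The case $m>1$ then follows by splitting off one $y$-coordinate at a time. If you want to keep your radial-coordinates framework, you would still need some analogue of this contradiction--curve-selection--monotonicity mechanism to get uniformity; as written, the proof does not close.
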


\begin{proof}
Induction on $m$.

\begin{itemize}

\item[$\bullet$] \textbf{The case of $m=1$:}

For the case of $n=1$, i.e $X\subset \mathbb{S}^{1}$,  by choosing a good coordinate ( we can choose a stereographic projection ) we can reduce this to the case of 
Lemma~\ref{lemma7} and this finishes the proof of this case. Hence by contradiction we assume that $n>1$, $\Omega =]a,b[$,  and there is no neighborhood $U$ of $x_{0}$ in $\mathbb{S}^{n}$ such that $f_{\mid U\cap X \times \Omega}$ is $U\cap X$-rectifiable with respect to $y$. We will discuss the next two cases:

\begin{itemize}

\item[case1:] Assume that $f^{-1}(0)=\emptyset$. We define the function:

\begin{center}
$g:X\times \Omega \longrightarrow \mathbb{R}_{+}$\\
$\; \; \; \; (x,y)\mapsto \frac{\parallel D_{y}f(x,y) \parallel}{\mid f(x,y) \mid}$.
\end{center}
Let's define the set:

\begin{center}
$O=\{ (x,y)\in X \times \Omega  \; : \; such \; that \; g(x,.) \; is \; monotone \; near \; y \;       \}$.
\end{center}
Since $O$ can be expressed by a first order formula, $O$ is a definable subset of $X \times \Omega $. Now take the set $\sum =(X \times \Omega )  \setminus O$, by Lemma~\ref{mono} we have for all $x\in X$:

\begin{center}
$dim(\sum_{x})=0  < dim (\Omega  )=1$
\end{center}
Hence by Lemma~\ref{lemma1} we can find a definable partition $\mathcal{C}_{1}$ of $X $ such that for all $C' \in \mathcal{C}_{1}$ there is a box $B_{C'}=]a_{c'},b_{c'}[ \subset \Omega $ such that:

\begin{center}
$C' \times B_{C'}  \cap \sum =\emptyset$
\end{center}
Then this means that for every $x\in C'$, $g(x,.)$ is monotone on $B_{C'}$, but we need that the monotonicity type does not depend on $x$ but only on $C'$. Let's choose $\mathcal{C}_{0}$ to be a partition of $X $ compatible with $\mathcal{C}_{1}$ and a collection of sets $\{ M_{C'}^{+},M_{C'}^{-},M_{C'}^{0} \}_{C'\in \mathcal{C}_{1}}$, where $M_{C'}^{+}$, $M_{C'}^{-}$, and $M_{C'}^{0}$ are defined by:

\begin{center}
$M_{C'}^{+}=\{ x\in C' \; : \; g(x,.) \; is \; increasing     \}$\\
$M_{C'}^{-}=\{ x\in C' \; : \; g(x,.) \; is \; decreasing     \}$\\
$M_{C'}^{0}=\{ x\in C' \; : \; g(x,.) \; is \; constant     \}$
\end{center}
Hence the monotonicity type of $g(x,.)$ depends only on the elements of $\mathcal{C}_{0}$, means that for any $C\in \mathcal{C}_0$ there is a box $B_C=]a_C,b_C[$ in $\Omega$ such that for any $x\in C$, $g(x,.)$ is monotone on $B_{C}$, with the same monotonicity for all $x\in C$ .Now let's apply our assumption to $\mathcal{C}_{0}$, then we can replace $X$ by an element $C\in \mathcal{C}_{0}$, with $x_{0} \in \overline{C}$ such that for every box $B$ in $\Omega $, $g$ is not bounded on $C\times B$. Take $D_{C}=[d_{C},e_{C}]\subset B_{C}$ and let's consider the graph of $g$ on $C\times D_{C}$:

\begin{center}
$\Gamma _{g}= \{ (x,y,g(x,y)) \; : \; y\in D_{C} \; , x\in C     \}\subset  \overline{C} \times D_{C}  \times (\mathbb{R}_{+} \cup \{ +\infty \})  $.
\end{center}
Consider $\overline{\Gamma}_{g}$ the closure of $\Gamma _{g}$ in $\overline{C}  \times D_{C}  \times (\mathbb{R}_{+} \cup \{ +\infty \})$. But since for any box $B$ in $D_C$, $g$ is not bounded on $C \times B$, we can find $y_{0}\in D_{C}$ such that:

\begin{center}
$(x_{0},y_{0},+ \infty)\in \overline{\Gamma _{g}}\setminus \Gamma _{g}$
\end{center}
By the curve selection Lemma there is a definable continuous curve $\gamma : [0,a[\longrightarrow \overline{\Gamma _{g}}$, with $\gamma (t)=(x(t),y(t),g(x(t),y(t)))$ and such that:

\begin{center}
$\gamma (0)=(x_{0},y_{0},\infty)$ and $\gamma (]0,a[)\subset \Gamma _{g}$.
\end{center}
Assume that $g(x,.)$ is increasing for all $x\in C$ (the other case is similar). In this case by Lemma~\ref{lemma7}, we can shrink $]0,a[$ and we can find a box $B$ in $]e_C,b_{C}[$ ( it is not empty because $e_{C}<b_{C}$ ) and $L>0$ such that for all $y\in B$, and for all $t\in ]0,a[$ we have:

\begin{center}
$g(x(t),y)\leqslant L$.
\end{center}
But we have also for all $y\in B$, and for all $t\in ]0,a[$: 

\begin{center}
 $g(x(t),y(t)) \leqslant g(x(t),y)<L$.
 \end{center} 
 
And this contradicts the fact that: 

\begin{center}
$lim _{t\mapsto 0}g(x(t),y(t))=+\infty$.
\end{center}

\vspace*{0.3cm}

\item[Case2:] We assume that $Z=f^{-1}(0)\neq \emptyset$. Take $\varepsilon >0$, define the set $A=\{x\in B(x_{0},\varepsilon)\cap X  \; : \; dim(Z_{x})=1  \}$. We have the following cases:

\begin{itemize}

\item[case.A:] Assume that $x_{0}\in  \overline{A}$, take $\mathcal{D}$ a definable partition of $B(x_{0},\varepsilon)\cap X$ compatible with $A$. Since $f$ is not $B(x_{0},\varepsilon)\cap X-$rectifiable with respect to $y$ near $x_{0}$, we can find $C\in \mathcal{D}$ with $x_{0}\in \overline{C}$ and such that for any partition $\mathcal{C}_{C}$ of $C$ there is $C'\in \mathcal{C}_{C}$ (with $x_{0}\in \overline{C}'$) such that for any box $B\subset \Omega$ there is no $L>0$ such that: 

\begin{center}
$\parallel D_{y}f(x,y) \parallel \leqslant L \mid f(x,y) \mid $ for $(x,y)\in C' \times B$ and $x$ in some neighborhood of $x_{0}$.
\end{center}
Here we discuss two subcases:

\begin{itemize}

\item[case.A.1:] Suppose $C\cap A=\emptyset$. Then by Lemma~\ref{lemma1} we can find a partition $\mathcal{C}_{C}$ of $C$ such that for every $C'\in \mathcal{C}_{C}$ there is a box $B_{C'}$ in $\Omega _{2}$ such that $(C' \times B_{C'}) \cap Z=\emptyset$,  hence $g$ is well-defined on $C'\times B_{C'}$. Then we may apply the same argument as in the proof of case 1 by considering for every $C'\in \mathcal{C}$ the function:

\begin{center}
$g:C'\times B_{C'} \longrightarrow \mathbb{R}_{+}$\\
$\; \; \; \; (x,y)\mapsto \frac{\parallel D_{y}f(x,y) \parallel}{\mid f(x,y) \mid}$.
 \end{center} 
\vspace*{0.3cm}

\item[case.A.2:] Suppose $C\subset A$. Then again by Lemma~\ref{lemma1} we can find a partition $\mathcal{C}_{C}$ of $C$ such that for every $C'\in \mathcal{C}_{C}$ there is a box $B_{C'}$ in $\Omega $ such that $(C' \times B_{C'}) \subset Z$ or $(C' \times B_{C'}) \cap Z=\emptyset$. If $(C' \times B_{C'}) \cap Z=\emptyset$, then we are again in the situation of case.A.1. If $(C' \times B_{C'}) \subset Z$, this gives that $f=0$ and $D_{y}f=0$ on $C' \times B_{C'}$ and this is a contradiction because we can choose any $L>0$ such that: 

\begin{center}
$\parallel D_{y}f(x,y) \parallel \leqslant L \mid f(x,y) \mid $ for $(x,y)\in C' \times B_{C'}$, and $x$ in some neighborhood of $x_{0}$.
\end{center}
\end{itemize}

\item[case.B:] Assume that $x_{0}\notin$  $\overline{A}$. Then we can separate $x_{0}$ from $A$ by some $B(x_{0},\varepsilon ')$ and proceed as in the case.A.2.  
  
\end{itemize}

\end{itemize}

\vspace{0.6cm}

\item[$\bullet$] \textbf{The case of $m>1$ :}  Assume that the statement of the lemma is true for any positive integer smaller than $m$. Fix $I=I_{1}\times ... I_{m}$ a box in $\Omega$, with $I_{i}=]a_{i},b_{i}[$ and denote by $I'=I_{2} \times ... \times I_{m}$. Apply the induction hypothesis ( the case of $m-1$ ) to the function:

\begin{center}
$f:(X\times I_{1})\times I' \longrightarrow \mathbb{R}$\\
$\; \; \; \; \; \; \; ((x,y_{1}),y')\mapsto f(x,(y_{1},y'))$.
\end{center}
Hence we can shrink $I_{1}$, and find a neighborhood $U$ of $x_{0}$, such that $f$ is $(X\cap U)\times I_{1}$-rectifiable with respect to $y'$, means that there is $L>0$ and a partition $\mathcal{P}$ of $(X\cap U)\times I_{1}$ ( we can choose it to be compatible with $X\cap U$ ) such that for any $C\in \mathcal{P}$ there is a box $B_{C}\subset I'$ and we have:

\begin{center}
$\forall (x,y_{1}) \in C$, $\forall y' \in B_{C}$ : $\parallel D_{y'}f(x,y) \parallel \leqslant L \mid f(x,y) \mid$.
\end{center}
Since $\mathcal{P}$ is compatible with $X\cap U$, we can find a partition $\mathcal{D}$ of $X\cap U$ and a definable maps $\phi _{1},..., \phi _{k_{D}}: D\longrightarrow I_{1}$ for every $D\in \mathcal{D}$, such that every $C\in \mathcal{P}$ is either the graph of one of these maps or a band between two graphs. Now take the set $\sum$ defined by:

\begin{center}
$\sum = \sqcup _{D\in \mathcal{D}} \bigcup _{i=1,...,k_{D}}\Gamma _{\phi _{l}} \subset (X\cap U) \times I_{1}$.
\end{center}
Since for all $x\in X\cap U$ we have $dim(\sum _{x})=0 < 1=dim(I_{1})$, by Lemma~\ref{lemma1} we can assume that the cells $C\in \mathcal{P}$ are of the form: 
 
 \begin{center}
  $C=D_{C}\times ]a_{C},b_{C}[$,
  \end{center} 
where $\{ D_{C} \}_{C\in \mathcal{P}}$ is a partition of $X\cap U$ and $]a_{C},b_{C}[\subset I_{1}$. Hence to complete the proof we need to show that for such $C\in \mathcal{P}$ with $x_{0}\in \overline{C}$, $f$ is $C$-rectifiable with respect to $y$ in some neighborhood of $x_{0}$. Now let's apply the case of $m=1$ to the function:

\begin{center}
 $f:(D_{C}\times B_{C}) \times ]a_{C},b_{C}[ \longrightarrow \mathbb{R}$
 $\; \; \; \; \; \; ((x,y'),y_{1})\mapsto f(x,y)$
 \end{center} 
We can find $U_{C}$ a neighborhood of $x_{0}$ in $X$ and we can shrink $B_{C}$, so that $f$ is $(U_{C}\cap D_{C})\times B_{C}$-rectifiable with respect to $y_{1}$, that there is $L'>0$ and a partition $\mathcal{P}_{C}$ of $(U_{C}\cap D_{C})\times B_{C}$ compatible with $U_{C}\times D_{C}$ such that for every $P\in \mathcal{P}_{C}$ there is a box $I_{P}\subset I_{1}$ and: 

\begin{center}
 $\forall (x,y')\in P$ and $\forall y_{1}\in I_{P}$ we have $\parallel  D_{y_{1}} f(x,y) \parallel \leqslant L' \mid  f(x,y) \mid$
 \end{center} 
Now by Lemma~\ref{lemma1} we can use the same argument as before and we can assume that every element  $P\in \mathcal{P}_{D}$ can be written in the form $P=M_{P} \times B_{P}$, where the collection $\{ M_{P} \} _{P} $ is a partition of $U_{C}\cap D_{C}$ and $\{ B_{P} \}_{P}$ are boxes in $B_{C}$. Therefore, finally, for any $M_{P}$ there is a box $I_{P}\times B_{P}\subset I$ such that on this box we have:

\begin{center}
 $\parallel  D_{y}f(x,y)  \parallel \leqslant (L+L') \mid f(x,y) \mid $.
 \end{center} 
This finishes the proof of the lemma.
\end{itemize}

\end{proof}

\begin{thm}
Let $A$ be definable subset of $\mathbb{R}^{n}$ such that $dim(A) \leqslant n-1$ and take $p\in \mathbb{N}^{\ast}$. Then there is $\varepsilon > 0$, $C>0$, and $\{v_{1},...,v_{k} \}\subset \mathbb{R}^{n-1}$ such that for every $x\in \mathbb{R}^{n}$ there is $i\in \{ 1,...,k \}$ such that $\pi _{v_{i}}$ is  $(\varepsilon,C,p)$- regular at $x$ with respect to $A$.
\end{thm}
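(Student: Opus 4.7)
The plan is to combine the weak regular projection theorem with the rectifiability lemma (Lemma~\ref{lemma8}) via a compactification argument that passes from local information to a finite cover.

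First, apply Theorem~\ref{theorem2} to $A$ and $p$ to obtain $\varepsilon_1 > 0$ and a finite set $\{w_1, \ldots, w_N\} \subset \mathbb{R}^{n-1}$ such that $\mathbb{R}^n = \bigcup_{j=1}^{N} Y_j$, where $Y_j := \{x \in \mathbb{R}^n : \pi_{w_j} \text{ is } (\varepsilon_1, p)\text{-weak regular at } x \text{ with respect to } A\}$. On each $Y_j$, weak regularity provides non-vanishing $C^p$ definable implicit functions $f^j_\ell(x, v)$ parametrizing the branches of $A \cap C_{\varepsilon_1}(x, w_j)$. A cell decomposition of $Y_j$ allows us to fix the branch count on each cell $D$ and view each $f^j_\ell$ as a definable function on $D \times B(w_j, \varepsilon_1)$.

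Second, embed $\mathbb{R}^n$ into $\mathbb{S}^n$ via stereographic projection so that each cell $D$ becomes a definable subset of $\mathbb{S}^n$. For each $x_0 \in \overline{D} \setminus D$ in $\mathbb{S}^n$, apply Lemma~\ref{lemma8} successively to each $f^j_\ell$: this yields a neighborhood $U_{x_0}$ of $x_0$, a partition of $U_{x_0} \cap D$, and for each piece an associated box in $B(w_j, \varepsilon_1)$ with a constant $c_\ell$ giving $\|D_v f^j_\ell\| \leq c_\ell |f^j_\ell|$ on piece~$\times$~box. Intersecting boxes across $\ell$ and taking common refinements of the partitions yields a single box and a single constant valid simultaneously for all branches on each refined piece. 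Combined with the standard continuity/compactness estimate at the interior points of $D$ in $\mathbb{S}^n$, this covers $\overline{D}$, and by the compactness of $\mathbb{S}^n$ only finitely many such neighborhoods are needed; summing over $j$ and over the cells $D$ gives a finite global partition.

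Finally, for each of the resulting finitely many triples (cell, neighborhood, piece) pick a representative vector $v_\beta \in \mathbb{R}^{n-1}$ from the associated box, let $\varepsilon > 0$ be the minimum of the ball radii and $C > 0$ the maximum of the constants. By construction, for every $x \in \mathbb{R}^n$ there exists $\beta$ such that $\pi_{v_\beta}$ is $(\varepsilon, C, p)$-regular at $x$ with respect to $A$.

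The main obstacle is the second step: Lemma~\ref{lemma8} must be applied simultaneously to all branches $f^j_\ell$ so as to produce a single common box and constant on each piece, which requires iterated shrinking of boxes and refinement of partitions, operations that preserve definability only because there are finitely many branches. A subtler point is that Lemma~\ref{lemma8} covers frontier points only, so interior points of a top-dimensional cell $D$ inside $\mathbb{S}^n$ need a separate argument, namely continuity of $\|\mathrm{grad}\, f^j_\ell\|/|f^j_\ell|$ on a compact subset of $D \times K$ for a compact box $K \subset B(w_j, \varepsilon_1)$; lower-dimensional cells are automatically handled since every one of their points lies in $\overline{D'} \setminus D'$ for some top-dimensional $D'$.
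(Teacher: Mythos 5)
Your proposal follows essentially the same route as the paper: invoke Theorem~\ref{theorem2} to get a cell decomposition with weak regular projections and branch functions $f^{j}_{\ell}$, embed into $\mathbb{S}^{n}$, apply Lemma~\ref{lemma8} at frontier points of each cell, handle the interior by boundedness of $\|D_v f\|/|f|$ on a compact set, and conclude by finiteness of the resulting partition and boxes. One remark at the end is incorrect, though not fatal: lower-dimensional cells are \emph{not} ``automatically handled'' by the frontier analysis of adjacent top-dimensional cells, since the branch functions over a lower-dimensional cell $D$ (and possibly the projection vector $w_j$ used there) are different from those over a neighbouring top-dimensional cell, and Lemma~\ref{lemma8} applied at a point $x_0\in\overline{D'}\setminus D'$ only bounds the ratio for points of $D'$ near $x_0$, not at $x_0$ itself. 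The correct treatment, as in the paper, is to run the same $\mathbb{S}^{n}$/Lemma~\ref{lemma8}/compactness argument on every cell of positive dimension regardless of its dimension (Lemma~\ref{lemma8} requires no dimension hypothesis on $X$), and to handle zero-dimensional cells directly by continuity of $\|D_v f\|/|f|$ on a closed ball in the parameter space.
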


\begin{proof}

 By Theorem~\ref{theorem2}, we can find a cell decomposition $\mathcal{C}=\{C_{1},...,C_{k} \}$ of $\mathbb{R}^{n}$, and $\varepsilon >0$ such that for every $i\in \{1,...,k \}$  there is $v_{i}\in \mathbb{R}^{n-1}$ such that $\pi _{v_{i}}$ is $(\varepsilon, p)$-weak regular at every point $x\in C_{i}$ with respect to $A$. Hence, for every $i\in \{ 1,...,k \}$ there are definable functions: 

\begin{center}
$f_{i,l}:C_{i}\times B(v_{i},\varepsilon)\longrightarrow  \mathbb{R}^{\ast} $\\
$\; \; \; (x,v)\mapsto f_{i,l} (x,v)$
\end{center}
such that each $f_{i,l}$ is $C^{p}$ with respect to $v$ and we have:
\begin{center}
$C_{\varepsilon}(x,v_{i})\cap X= \sqcup _{l}\{x+f_{i,l}(x,v )(v , 1) \; : \; v \in B(v_{i} ,\varepsilon )  \}$.
\end{center}
Now we will find a refinement of $\mathcal{C}$, constants  $\varepsilon ' < \varepsilon$ and $C>0$, and a $(C,\varepsilon ' , p)$-regular projections of $\mathbb{R}^{n}$ with respect to $A$. For this, it is enough to prove that for every $C_{i}\in \mathcal{C}$ the maps $f_{i,l}$ are $C_{i}-$rectifiable with respect to $v$ , means that there is $c>0$ and a definable partition $\mathcal{C}_{i}$ of $C_{i}$ such that for every $D\in \mathcal{C}_{i}$ there is a box $B_{D}\subset B(v_{i},\varepsilon )$ with:

\begin{center}
$\frac{\parallel D_{v}f_{i,l} \parallel }{\mid  f_{i,l} \mid} <c$ on $D\times B_{D}$. 
\end{center}
Take $C_{i} \in \mathcal{C}$, we have: 
\vspace{0.5cm}
\begin{itemize}

\item[$\bullet$] If $C_{i}$ is of dimension $0$, means that $C_{i}=\{x\}$ is a point, then by continuity of $\frac{\parallel D_{v}f_{i,l} \parallel }{\mid  f_{i,l} \mid}$ we can find a closed ball $\overline{B}(v_{i},\varepsilon ')\subset B(v_{i},\varepsilon)$ such that $\frac{\parallel D_{v}f_{i,l} \parallel }{\mid  f_{i,l} \mid}$ is bounded by some $c>0$ on this ball. Hence the maps $f_{i,l}$ are $C_{i}-$rectifiable with respect to $v$. 

\vspace{0.5cm}

\item[$\bullet$] Assume that $C_{i}$ is of dimension $dim(C_{i})>0$. We know that we have a natural definable embedding ( it is the inverse map of the stereographic projection, and it is semi-algebraic) of $\mathbb{R}^{n}$ in $\mathbb{S}^{n}$:

\begin{center}
$E:\mathbb{R}^{n} \longrightarrow  \mathbb{S}^{n}$.
\end{center}
We can replace $C_{i}$ by $E(C_{i})$ and $f_{i,l}$ by the maps $f_{i,l}\circ (E^{-1}, Id _{B(v_{i},\varepsilon)})$. Indeed if the maps $f_{i,l}\circ (E^{-1}, Id _{B(v_{i},\varepsilon)})$ are $E(C_{i})-$rectifiable then the maps $f_{i,l}$ are also $C_{i}-$rectifiable.  \\    

 Take the closure $\overline{C}_{i}$ of $C_{i}$ in $\mathbb{S}^{n}$ (it is a compact subset since $\mathbb{S}^{n}$ is compact ). Take $(U_{1},...,U_{k})$ an open definable cover of $\overline{C}_{i}$ given by: 

\begin{center}
  $U_{j}=\overline{C}_{i}\cap B(x_{j},r _{j})$ for  $j=1,...,k$, such that $x_{j}\in \overline{C}_{i}$ and $r_{j}>0$.
  \end{center}  
Now take $x\in \partial C _{i}= \overline{C} _{i}\setminus C_{i}$, hence there is a $j_{x}\in \{ 1,...,k \}$ such that $x\in U_{j_{x}}$. By applying  Lemma~\ref{lemma8} to the maps $f_{i,l}:(U_{j_{x}}\setminus \partial C_{i}) \times B(v_{i},\varepsilon) \longrightarrow \mathbb{R}$ (here $U_{j_{x}} \setminus \partial C_{i}=X$ and $B(v_{i},\varepsilon)=\Omega$), we can find a neighborhood $O_{x}$ of $x$ in $\overline{C}_{i}$ such that $f_{i,l}$ are $O_{x}\cap C_{i}-$rectifiable with respect to $v$ . Then by compactness of $\partial C _{i}$, we can choose a finite  cover $(O_{x_{1}},...,O_{x_{m}})$ of an open neighborhood of $\partial C _{i}$ in $\overline{C}_{i}$ such that $f_{i,l}$ are $(\bigcup _{s} O_{x_{s}})\cap C_{i}-$rectifiable with respect to $v$. Now since  $d(\partial C_{i} , C_{i} \setminus (\bigcup _{s} O_{x_{s}}))>0$, we can find a compact subset $K\subset C_{i}$ with $C_{i}\setminus (\bigcup _{s} O_{x_{s}}) \subset K$, hence  the functions $\frac{\parallel D_{v}f_{i,l} \parallel }{\mid  f_{i,l} \mid}$ are bounded on $C_{i} \setminus (\bigcup _{s} O_{x_{s}})$, therefore $f_{i,l}$ are $C_{i} \setminus (\bigcup _{s} O_{x_{s}})-$rectifiable with respect to $v$. Finally, since $(\bigcup _{s}O_{x_{s}} \cap C_{i}, C_{i}\setminus \bigcup _{s}O_{x_{s}})$ is a definable cover of $C_{i}$, we deduce that the functions $f_{i,l}$ are $C_{i}-$rectifiable with respect to $v$.

\end{itemize}

\end{proof}

\section{Existence of Regular covers}
Take $\mathcal{D}$ an o-minimal structure on $( \mathbb{R},+,.)$. Let $U$ be an open definable relatively compact subset of $\mathbb{R}^{n}$. By a regular cover of $U$, we mean a finite cover $(U_{i})$ by open definable sets, such that:  
\begin{itemize}
\item[$(1)$] each $U_{i}$ is homeomorphic to the open unit ball in $\mathbb{R}^{n}$ by a definable homeomorphism.
\item[$(2)$] there is a positive number $C$ such that for all $x$ in $\mathbb{R}^{n}$ we have :
\begin{center}
$d(x,\mathbb{R}^{n}\setminus U)\leq Cmax_{i}d(x,M\setminus U_{i})$.
\end{center}
\end{itemize}

\textbf{Example:} Take $U=\{(x,y)\in \mathbb{R}^{2} \; ; \; 0<x<1 \; and \; -1<y<1     \}$, $U$ is a definable set in any o-minimal structure on $\mathbb{R}$. The definable cover  $(U_{1},U_{2})$ of $U$ defined by:
\begin{center}
$U_{1}=\{(x,y)\in U \; ; \; y>-\frac{1}{2}  \}$\\
$U_{2}=\{(x,y)\in U \; ; \; y<\frac{1}{2}    \}$
\end{center}
is a regular cover. Indeed, take $p=(x,y)\in U$, then we have:
\begin{center}
		$d(p,\mathbb{R}^{2}\setminus U)<max(d(p,\mathbb{R}^{2}\setminus U_{1}),d(p,\mathbb{R}^{2}\setminus U_{2}))$.
	\end{center}
Now take the definable cover  $(U'_{1},U'_{2})$ defined by:
\begin{center}
$U'_{1}=\{(x,y)\in U \; ; \; y<x^{2}  \}$\\
$U'_{2}=\{(x,y) \in  U \; ; \; y>-x^{2}   \}$
\end{center}
Then $(U'_{1},U'_{2})$ is not a regular cover. Assume that this is a regular cover, take a point $p_{a}=(a,0)\in U$. Then we have:
\begin{center}
 $d(p_{a},\mathbb{R}^{2}\setminus U)=a$ for $a<\frac{1}{2}$ \\
 $d(p_{a},\mathbb{R}^{2}\setminus U'_{1})=d(p_{a},\mathbb{R}^{2}\setminus U'_{2})<a^{2} $.
 \end{center} 
Hence we have for $a<\frac{1}{2}$:
\[
\frac{d(p_{a},\mathbb{R}^{2}\setminus U)}{max(d(p_{a},\mathbb{R}^{2}\setminus U'_{1}),d(p_{a},\mathbb{R}^{2}\setminus U'_{2}))}=\frac{d(p_{a},\mathbb{R}^{2}\setminus U)}{d(p_{a},\mathbb{R}^{2}\setminus U'_{1})}>\frac{a}{a^{2}}=\frac{1}{a}\mapsto _{a\mapsto 0} +\infty .
\]
Hence this is a contradiction with the definition of a regular cover.\\

\begin{thm}\label{thm3}
For any open relatively compact definable subset $U$ of $\mathbb{R}^{n}$ there exists a regular cover.
\end{thm}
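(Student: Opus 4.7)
The plan is to adapt the strategy of Parusi\'nski's construction in \cite{Pa2}, with the weak regular projection theorem (Theorem~\ref{theorem2}) playing the role of the regular projection theorem. The introduction indicates that Theorem~\ref{theorem2} was singled out precisely because it holds in an arbitrary o-minimal structure and is strong enough to drive the cover construction, even without the extra gradient bound of full regularity. First I would apply Theorem~\ref{theorem2} to the frontier $\partial U = \overline{U} \setminus U$, which is definable and of dimension at most $n-1$ because $U$ is open. This produces $\varepsilon > 0$ and a finite set $\{v_1, \ldots, v_k\} \subset \mathbb{R}^{n-1}$ such that at each $x \in \mathbb{R}^n$ some $\pi_{v_i}$ is $(\varepsilon, p)$-weak regular at $x$ with respect to $\partial U$.

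Next I would choose a cell decomposition of $\mathbb{R}^n$ compatible with $U$, $\overline{U}$, $\partial U$ and with the definable loci $\{x : \pi_{v_i} \text{ is } (\varepsilon,p)\text{-weak regular at } x \text{ with respect to } \partial U\}$ for each $i$, and use it to assign to every cell $D \subset U$ a direction $v_{i(D)}$. For each $i$, the candidate cover piece $U_i \subset U$ is built as follows: at any $x$ in a cell with $i(D) = i$, the cone $C_\varepsilon(x,v_i)$ meets $\partial U$ in a disjoint union of graphs $A_{f_{i,l}}$ of definable $C^p$-functions on $B(v_i,\varepsilon)$, so $U \cap C_\varepsilon(x,v_i)$ decomposes into open bands between consecutive graphs. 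Gluing the bands that vary smoothly across the cell and applying a further definable cellular subdivision in the direction $(v_i,1)$, one obtains open definable subsets of $U$ each definably homeomorphic to the open unit ball of $\mathbb{R}^n$, giving condition $(1)$. The resulting finite collection is taken as the cover.

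The main obstacle will be verifying condition $(2)$, the distance inequality
\[
d(x, \mathbb{R}^n \setminus U) \leq C \max_i d(x, \mathbb{R}^n \setminus U_i),
\]
with the weak version in place of the full regular projection theorem. I would argue by contradiction via the curve selection lemma: if no such $C$ existed, there would be a definable curve $\gamma : (0,\delta) \to U$ along which the ratio between the two sides diverges as $t \to 0^+$. By Theorem~\ref{theorem2} some fixed $v_i$ is $(\varepsilon,p)$-weak regular along (a subcurve of) $\gamma$, so the nearest point of $\mathbb{R}^n \setminus U_i$ to $\gamma(t)$ lies on one of the bounding graphs $A_{f_{i,l}}$ produced by the cone $C_\varepsilon(\gamma(t),v_i)$; the fixed aperture $\varepsilon$ makes this distance comparable, up to a factor depending only on $\varepsilon$, to $d(\gamma(t), \partial U) = d(\gamma(t), \mathbb{R}^n \setminus U)$, contradicting the divergence of the ratio. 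Compactness of $\overline{U}$ consolidates the local constants into a single uniform $C$, completing the proof.
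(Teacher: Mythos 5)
There is a genuine gap in your verification of condition $(2)$. The boundary of a band-shaped piece $U_i$ built between graphs over some base has two parts: a \emph{horizontal} part lying in $\partial U$ (the graphs cut out of $X=\partial U$ by the cone) and a \emph{lateral} part sitting over the boundary of the base of the band, which in general is \emph{not} contained in $\partial U$. Your curve-selection argument assumes that the nearest point of $\mathbb{R}^n\setminus U_i$ to $\gamma(t)$ lies on one of the bounding graphs $A_{f_{i,l}}$, i.e.\ on the horizontal part; but it may lie on the lateral part, and nothing in your construction (cells of an arbitrary compatible decomposition of $\mathbb{R}^n$, with bands glued over them) prevents a point from being close to the lateral wall of \emph{every} piece containing it. The example $(U'_1,U'_2)$ given just before Theorem~\ref{thm3} shows exactly this failure: each $U'_j$ is a definable band between graphs, yet the cover is not regular because the portions of $\partial U'_j$ not contained in $\partial U$ (the parabolas $y=\pm x^2$) come too close to the points $(a,0)$. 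Weak regularity of $\pi_{v_i}$ along $\gamma$ only compares $d(\gamma(t),\partial U\setminus C_\varepsilon(\gamma(t),v_i))$ with the projected distance; it gives no control on the lateral walls, so the contradiction you want does not follow. The appeal to compactness of $\overline U$ to uniformize constants also does not help, since all the relevant distances degenerate as one approaches $\partial U$.

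The paper closes precisely this gap by an induction on the ambient dimension $n$, which your proposal omits. For each weak regular direction $v_j$ one forms the discriminant $\Delta_{\pi_j}=\pi_j(Z)\cup CV\bigl((\pi_j)_{\mid Reg(X)}\bigr)$, applies the induction hypothesis to obtain a \emph{regular} cover $(U_{j,i})_i$ of $\pi_j(U)\setminus\Delta_{\pi_j}$ in $\mathbb{R}^{n-1}$, and takes as pieces the bands $U_{j,i,m}$ of $U\cap\pi_j^{-1}(U_{j,i})$ between consecutive graphs. For $x$ in the weak-regularity locus $R(\pi_j)$, the cone estimate of Lemma~\ref{lemma10} (which, as you correctly anticipated, needs only weak regularity) gives $d(x,X)\le d(x,X\setminus C^j_\varepsilon(x))\le C_j\, d(\pi_j(x),\Delta_{\pi_j})$, and the regularity of the cover downstairs gives $d(\pi_j(x),\Delta_{\pi_j})\le C_j\, d(\pi_j(x),\mathbb{R}^{n-1}\setminus U_{j,i})$; since the lateral wall of $U_{j,i,m}$ projects into $\partial U_{j,i}$, this last quantity bounds $d(x,V)$ from below, which is what controls the dangerous part of the boundary. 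Without a regular cover of the projected set there is no way to bound the distance to the lateral walls, and no choice of cell decomposition upstairs will substitute for that inductive input.
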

\begin{proof}
Take $X=\partial U=\overline{U}\setminus U$, $X$ is compact and definable of dimension $n-1$. Take $Z=Sing(X)$ the set of the points where $X$ is not $n-1$ dimensional submanifold of $\mathbb{R}^{n}$ near this points (i.e that $Z=(Reg(X))^{c}$ ). For any linear projection $P:\mathbb{R}^{n}\longrightarrow \mathbb{R}^{n-1}$ (i.e, a projection with respect to a vector in $\mathbb{R}^{n} $), we define the discriminant $\Delta _P$ by:
\begin{center}
$\Delta _P=P(Z) \cup CV(P _{\mid Reg(X)})$,
\end{center}
where $CV(\pi _{\mid Reg(X)})$ is the set of critical values of $P$ on $Reg(X)$.\\
It's obvious that $\Delta _P$ is definable subset of $\mathbb{R}^{n-1}$, because $P$ is a definable map and $CV(P _{\mid Reg(X)})$ can be described by a first order formula. And we have also :
\begin{center}
$\overline{P(U)}=P(U)\cup \Delta _P$.
\end{center}
Take $\Lambda =\{\pi _{1},...,\pi _{k}  \}$ a set of $\varepsilon$-weak regular projection with respect to $X$, and $v_1 ,...,v_k \in \mathbb{R}^{n-1}$ such that $\pi _j = \pi _ {v_j}$. For $x\in \mathbb{R}^{n}$ we denote by $C_{\varepsilon}^j (x)$ the cone:
\begin{center}
$C_{\varepsilon}^j (x)=C_{\varepsilon } (x,v_j)= \{   x+t(v,1) \; ; \; t\in \mathbb{R}^\ast, \; v\in B(\varepsilon , v_j)    \}$.
\end{center}
\begin{lem} \label{lemma10}
Take $\pi _j \in \Lambda$, and define:
\begin{center}
$R(\pi _j)=\{ x \in U \; : \;  \pi _j \; is \; \varepsilon -weak \; regular \; at \; x \; with \; respect \; to \; X \} $.
\end{center}
Then we have $\pi _j (R(\pi _j )) \cap \Delta _{\pi _j }=\emptyset$ and there is some $C > 0$ such that for all $x\in R(\pi _j)$ we have:
\begin{center}
$(3.1) \; \; \; \; \; \; \; \; \; \; d(x,X\setminus C_{\varepsilon}^j (x))\leq Cd(\pi _j (x),\pi _j (X \setminus C_{\varepsilon} ^j (x))) \leq C d(\pi _j (x),\Delta _{\pi _j })$.
\end{center}
\end{lem}
\begin{proof}
It's enough to assume that $\pi _j$ is the standard projection $\pi _j=\pi _0=\pi$. If $\pi  (R(\pi )) \cap \Delta _{\pi   } \neq \emptyset$,  then there is $x'\in \pi  (Z) \cup CV(\pi  _{\mid Reg(X)})$ such that $x'=\pi (x)$ and $x\in R(\pi )$, but since $\pi $ is $\varepsilon$-weak regular a $x$  then $\pi   ^{-1} \pi (x) \subset (Reg(X)) $, hence the contradiction, so $\pi (R(\pi )) \cap \Delta _{\pi }=\emptyset$.\\
Since $\Delta _{\pi } \subset \pi (X\setminus C_{\varepsilon} ^0(x))$, we have $d(\pi (x),\pi (X \setminus C_{\varepsilon} ^0(x))) \leq  d(\pi (x),\Delta _{\pi })$, and to show the second claim it's enough to find $C> 0$ such that:

\begin{center}
 
 $d(x,X\setminus C_{\varepsilon} ^0(x))\leq Cd(\pi  (x),\pi (X \setminus C_{\varepsilon} ^0(x)))$.
\end{center} 
But for any $x'\notin C_{\varepsilon} ^0(x)$ we have:

\begin{center}
$\frac{d(x,x')}{d(\pi (x),\pi (x'))}\leqslant 1+ \frac{1}{\varepsilon}$
\end{center}
Indeed, take $x'=x+t(v,1)\notin C_{\varepsilon } ^0(x)$, with $t\in \mathbb{R}^{\ast}$ and $\parallel v  \parallel \geqslant \varepsilon$, hence $\pi  (x')=\pi (x)+tv$, then we have:

\begin{center}
$\frac{d(x,x')}{d(\pi (x),\pi (x'))}=\frac{\mid t \mid \parallel (v,1) \parallel}{\mid t \mid \parallel v \parallel} \leqslant 1+\frac{1}{\varepsilon}$.
\end{center}
Now for any $x'\in X\setminus C_{\varepsilon}(x)$ we have: 

\begin{center}
$d(x,x')\leqslant (1+ \frac{1}{\varepsilon})d(\pi (x),\pi (x'))$.
\end{center}
But since this is for any $x'\in X\setminus C_{\varepsilon}(x)$, by the definition of the infimum we deduce that:  

\begin{center}

$d(x,X\setminus C_{\varepsilon}(x))\leq (1+\frac{1}{\varepsilon})d(\pi (x),\pi (X \setminus C_{\varepsilon}(x)))$.

\end{center}
\end{proof}

\begin{rem} For the left side inequality in Lemma ~~\ref{lemma10}, we don't need $x$ to be in $R(\pi _j)$. 
\end{rem}

For the proof of Theorem ~\ref{thm3} we proceed by induction on $n$. Assume that Theorem ~\ref{thm3} is true in $\mathbb{R}^{n-1}$. Fix $j\in \{ 1,...,k \}$, then by the induction assumption and by Lemma~\ref{lemma10} there is a $C_{j}>1$ and a finite definable cover $(U_{j,i})_{i\in I_j  }$ of $\pi _{j}(U)\setminus \Delta _{\pi _j}$ such that we have for all $x'\in \mathbb{R}^{n-1}$:

\[
d(x',\mathbb{R}^{n-1}\setminus (\pi _{j}(U)\setminus \Delta _{\pi _j}))\leq C_{j}max _{i}d(x', \mathbb{R}^{n-1}\setminus(U_{j,i}))
\]

and for all $x\in R(\pi _j)$ we have by Lemma~\ref{lemma10}: 
\[
d(x,X\setminus C_{\varepsilon}^j (x))\leq C_j d(\pi _j (x),\pi _j (X \setminus C_{\varepsilon} ^j (x))) \leq C_j d(\pi _j (x),\Delta _{\pi _j }).
\]

Now take a cell decomposition of $\mathbb{R}^{n}$ compatible with $U$, $X$, $Z$, $CP((\pi _{j})_{\mid _{X}})$, and $\pi _j (U)\setminus \Delta _{\pi _j}$. Then for each $i\in I_{j}$ there are a definable functions:

\begin{center}
$\phi _{1}<\phi _{2}<...<\phi _{l_i} :U_{j,i}\longrightarrow \mathbb{R}$
\end{center}
such that $X\cap \pi _{j}^{-1}(U_{j,i})$ is the disjoint union of graphs of these functions, and $U\cap \pi _{j}^{-1}(U_{j,i})$ is the disjoint union of the open sets bounded by the graphs of these functions. So for every $j\in \{1,...,k  \}$ and $i\in I_j$ we have that : 

\begin{center}
$U\cap \pi _{j}^{-1}(U_{i,j})=\bigcup _{m\in M_{j,i}} U_{j,i,m}$ ,
\end{center}

where $U_{j,i,m}$ are open definable subsets of $U$,  given by:

\begin{center}
$U_{j,i,m}=\{(x',x_{n}) \; : \; x'\in U_{j,i} \; and         \; \phi _{m_1}(x')<x_n <\phi _{m_2}(x')    \; \}$, with $\Gamma _{ \phi _{m_1} }\subset X$ and $\Gamma _{ \phi _{m_2} }\subset X$.
\end{center}

Now take $x\in U$, hence by the weak projection theorem there is a projection $\pi _j\in \Lambda$ such that $x\in R(\pi _j)$. Let's consider $i\in I_j$ such that $\pi _j (x) \in U_{j,i}$ and : 
\begin{center}
$(3.2)\; \; \; \; \; \; \; \; \; \; \; d(\pi _j (x),\mathbb{R}^{n-1}\setminus (\pi _{j}(U)\setminus \Delta _{\pi _j}))\leq C_{j}d(\pi _j (x), \mathbb{R}^{n-1}\setminus(U_{j,i}))$.
\end{center}

Since $\partial (\pi _{j}(U)\setminus \Delta _{\pi _j}) \subset \Delta _{\pi _j}$, we have :

\begin{center}
$(3.3)\; \; \; \; \; \; \; \; \; \; \;d(\pi _j (x), \Delta _{\pi _j})\leq C_{j}d(\pi _j (x), \mathbb{R}^{n-1}\setminus(U_{j,i}))$.
\end{center}

Now take $m\in M_{j,i}$ such that $x\in U_{j,i,m}$, then we claim that :

\begin{center}
$(3.4)\; \; \; \; \; \; \; \; \; \; \;d(x,X)\leqslant (C_j)^{2}d(x,\mathbb{R}^{n}\setminus U_{j,i,m})$.
\end{center}

To prove $(3.4)$ we discuss two cases ( the first is obvious ) :

\begin{itemize}
\item[$(1)$] $d(x,\mathbb{R}^{n}\setminus U_{j,i,m})\geqslant d(x,X)$.
\item[$(1)$] $d(x,\mathbb{R}^{n}\setminus U_{j,i,m}) < d(x,X)$. In this case let $V=\partial U_{j,i,m} \cap \pi _j ^{-1} (\partial U_{j,i})$ ( the vertical part of $\partial U_{j,i,m}$ ). We have then:  
  
\begin{center}
  $d(x,\mathbb{R}^{n}\setminus U_{j,i,m})=d(x,V)$,
  \end{center}  
because $d(x,\mathbb{R}^{n}\setminus  U _{j,i,m})=min\{\; d(x,V), \; d(x,\partial U_{j,i,m} \cap X)     \}$. Therefore by $(3.1)$ and $(3.3)$ we have :

\begin{center}
	\begin{eqnarray*}
	d(x,X)&\leqslant & d(x,X\setminus C_{\varepsilon}^j (x)) \\
		&\leqslant & C_j d(\pi _j (x), \Delta _{\pi _j}) \\
		&\leqslant & C_j ^{2} d(\pi _j (x), \mathbb{R}^{n-1} \setminus U_{j,i}) \\
		&\leqslant & C_j ^{2} d(x,V) \\
		&\leqslant & C_j ^{2} d(x,\mathbb{R}^{n} \setminus U_{j,i,m}).
	\end{eqnarray*}
\end{center}  
And this proves $(3.4)$.\\
Finally, we have a finite cover $(U_{j,i,m})_{m\in M_{j,i},i\in I_{j},j}$ of $U$. Take $C=max_{j}C_{j}^{2}$. Hence for $x\in U $ we have:
\[
d(x,\mathbb{R}^{n}\setminus U)=d(x,X)\leqslant (C_{j})^{2}d(x,\mathbb{R}^{n}\setminus U_{j,i,m})\leqslant Cmax_{j,i,m}d(x,\mathbb{R}^{n}\setminus U_{j,i,m}).
\]
  
\end{itemize}

\end{proof}

\textbf{Acknowledgment.}  The author is very grateful to his thesis advisor Adam Parusi\'nski for his help and support, and the long hours of discussion he
devoted to him during the preparation of this work.

\end{document}